\newtheorem{thm}{Theorem}[section] 
\newtheorem{cor}[thm]{Corollary}
\newtheorem{lem}[thm]{Lemma}
\newtheorem{prop}[thm]{Proposition}
\newtheorem{rem}[thm]{Remark}
\newcommand\operA[2]{{\if!#2!\operatorname{#1}\else{\operatorname{#1}_{#2}^{\phantom{I}}}\fi}} 
\newcommand\set[1]{\{#1\}}
\newcommand\charac[1]{\mathrm{char}\left(#1\right)}
\newcommand\Cref[1]{{Corollary~\ref{#1}}}%
\def\s{\sigma}
\newcommand{\Trace}[1][]{\if!#1!\operatorname{Tr}\else{\operatorname{Tr}_{#1}^{\phantom{I}}}\fi} 
\long\def\forget#1\forgotten{{}} %
\def\({\left(}
\def\){\right)}
\newif\iffurther
\newif\ifXY 
\journal{Linear Algebra and its Applications}
\begin{document}

\begin{frontmatter}

\title{On the generalized Clifford algebra of a monic polynomial}

\author[1]{Adam Chapman}
\ead{adam1chapman@yahoo.com}

\author[2]{Jung-Miao Kuo\corref{cor}}
\ead{jmkuo@nchu.edu.tw}

\cortext[cor]{Corresponding author, postal address: Information Science Building R405, 250 Kuo Kuang Road, Taichung 402, Taiwan, tel: 886-4-2286-0133 ext.405, fax: 886-4-2287-3028}

\address[1]{ICTEAM Institute, Universit\'{e} Catholique de Louvain, B-1348 Louvain-la-Neuve, Belgium}
\address[2]{Department of Applied Mathematics, National Chung-Hsing University, Taichung 402, Taiwan}

\begin{abstract}
In this paper we study the generalized Clifford algebra defined by Pappacena of a monic (with respect to the first variable) homogeneous polynomial $\Phi(Z,X_1,\dots,X_n)=Z^d-\sum_{k=1}^d f_k(X_1,\dots,X_n) Z^{d-k}$ of degree $d$ in $n+1$ variables over some field $F$. We completely determine its structure in the following cases: $n=2$ and $d=3$ and either $\charac{F}=3$, $f_1=0$ and $f_2(X_1,X_2)=e X_1 X_2$ for some $e \in F$, or $\charac{F} \neq 3$, $f_1(X_1,X_2)=r X_2$ and $f_2(X_1,X_2)=e X_1 X_2+t X_2^2$ for some $r,t,e \in F$. Except for a few exceptions, this algebra is an Azumaya algebra of rank nine whose center is the coordinate ring of an affine elliptic curve. We also discuss representations of arbitrary generalized Clifford algebras assuming the base field $F$ is algebraically closed of characteristic zero.
\end{abstract}

\begin{keyword}
Clifford algebra, Azumaya algebra, Finite-dimensional representation
\MSC[2010] primary 15A66, 16K20; secondary 11E76, 16H05
\end{keyword}

\end{frontmatter}

\section{Introduction}
Given a homogeneous polynomial $f(X_1,\dots,X_n)$ in $n$ variables of degree $d$ over a field $F$, its Clifford algebra $C_f$ is defined to be the quotient of the free associative $F$-algebra $F\{x_1,\dots,x_n\}$ by the ideal generated by the elements
$(a_1 x_1+\dots+a_n x_n)^d - f(a_1,\dots,a_n)$ for all $a_1,\dots,a_n \in F$.
This definition was introduced by Roby in \cite{Roby}. It generalizes the classical notion of the Clifford algebra of a quadratic space. If the quadratic space is regular, its Clifford algebra is known to be a tensor product of quaternion algebras over $F$ if the dimension is even or over a quadratic extension of $F$ if the dimension is odd (see \cite{Lam} or \cite{BOI}).

The Clifford algebra is invariant under linear change of variables, and therefore it is used for classifying homogeneous polynomials. Another use of the Clifford algebras is in the study of central simple algebras. Given a central simple algebra $A$ that contains an $F$-vector subspace $V$ such that $v^d \in F$ for all $v \in V$, the subalgebra $F[V]$ of $A$ is a representation of the Clifford algebra $C_f$ where $f(X_1,\dots,X_n)=(v_1 X_1+\dots+v_n X_n)^d$ for some fixed basis $v_1,\dots,v_n$ of $V$.  Consequently, the representations of the Clifford algebras, and in particular their simple images, are of great importance.

The case of $n=2$ and $d=3$ was first considered
by Heerema in \cite{Heerema}. Haile studied these algebras in the series of papers
\cite{Haile, H2, H3, Haile2}. He showed that in characteristic
not $2$ or $3$, if the binary cubic form $f$ is nondegenerate, $C_f$ is an Azumaya algebra of rank 9, whose center is isomorphic to the coordinate ring of the affine
elliptic curve $s^2=r^3-27\Delta$ where $\Delta$ is the
discriminant of $f$. Hence the simple homomorphic
images of $C_f$ are cyclic algebras of degree $3$. Moreover, there is a one-to-one correspondence between Galois orbits of points on that affine elliptic curve and the simple homomorphic images of $C_f$. The induced mapping from the group of $F$-rational points on the elliptic curve described above with the point at infinity as unity to the Brauer group of $F$ is a group homomorphism with image the relative Brauer group of the function field of the curve $C$ given by $Z^3-f(X,Y)=0$. Finally, he proved that $C_f$ is split if and only if the curve $C$ has an $F$-rational point.

On the other hand, irreducible representations of Clifford algebras were studied in papers such as \cite{CKM, HT, Van}. It was shown in \cite{HT} that if $f$ is nondegenerate and $F$ is an infinite field, the dimension of any representation of $C_f$ is divisible by the degree of $f$. In the case where $F$ is algebraically closed of characteristic 0, Van den Bergh in \cite{Van} showed that the equivalence classes of $dr$-dimensional representations of $C_f$ are in one-to-one correspondence with isomorphism classes of certain vector bundles (later known as Ulrich bundles) of rank $r$ on the hypersurface in $\mathbb{P}^n$ given by the equation $Z^d- f(X_1,\dots,X_n)=0$. Advanced results on representations of Clifford algebras were recently derived in \cite{CKM} via the study of the Ulrich bundles on the geometric side of this correspondence.

In \cite{Pappacena}, Pappacena generalized the notion of the Clifford algebra to the algebra associated to a monic polynomial (with respect to the first variable) over $F$ of the form $\Phi(Z,X_1,\dots,X_n)=Z^d-\sum_{k=1}^d f_k(X_1,\dots,X_n) Z^{d-k}$ where each $f_k$ is a homogeneous polynomial of degree $k$.
This algebra, denoted there by $C_\Phi$, is defined to be the quotient of the free associative $F$-algebra $F\{ x_1,\dots,x_n\}$ by the ideal generated by the elements
\begin{equation*}
(a_1 x_1+\dots+a_n x_n)^d -\sum_{k=1}^d f_k(a_1,\dots,a_n) (a_1 x_1+\dots+a_n x_n)^{d-k}
\end{equation*}
for all $a_1,\dots,a_n \in F$.
The algebra $C_{\Phi}$ is called the generalized Clifford algebra of the monic polynomial $\Phi$. Pappacena proved in that paper that if $d=2$ then this generalized Clifford algebra is isomorphic to the classical Clifford algebra of a quadratic form, and therefore its structure is known. However, he said little when $d\geq 3$.

In \cite{Kuo}, Kuo 
studied the generalized Clifford algebra of the monic polynomial $\Phi(Z,X,Y)=Z^3-e XYZ-f(X,Y)$ in characteristic not 2 or 3 and derived the results very similar to those obtained by Haile when $\Phi$ is absolutely irreducible. In particular, $C_{\Phi}$ is an Azumaya algebra of rank 9 and its center is isomorphic to the coordinate ring of an affine elliptic curve whose closure in $\mathbb{P}^2$ is the Jacobian of the cubic curve given by $\Phi(Z,X,Y)=0$. This result was recently applied in \cite{FN} to solving a problem about $3\times 3\times 3$ cubes, which relates to the computation of the Cassels-Tate pairing on the 3-Selmer group of an elliptic curve over a number field. These generalized Clifford algebras with $e\neq 0$ were also used in \cite{HK}, where the reducible case was fully understood, to classify hyperbolic planes in the space of reduced trace zero elements in a division algebra of degree three into three basic types.

The contents of this paper are structured as follows. In Section \ref{evd} we present the basic tool, namely the eigenvector decomposition, which enables us to obtain the results in the following sections.
In Section \ref{Seccharnot3} we study the generalized Clifford algebra of the monic polynomial $\Phi(Z,X,Y)=Z^3-r YZ^2-(e XY+t Y^2) Z-(\alpha X^3+\beta X^2 Y+\gamma X Y^2+\delta Y^3)$ in characteristic not $3$.
The formulas for the simple homomorphic images provided in this section fill the gap that was left open in \cite{Kuo}, where formulas were provided only in the case of $r=t=\beta=\gamma=0$.
In Section \ref{Secchar3}, we study the generalized Clifford algebra of the monic polynomial $\Phi(Z,X,Y)=Z^3-e XYZ-f(X,Y)$ in characteristic $3$. This complements the results in \cite{Kuo}, which were derived only for characteristic different from $3$.
In Section \ref{Secchar0}, we study representations of arbitrary generalized Clifford algebras assuming $F$ is algebraically closed of characteristic zero. We shall give positive answers to both parts of Question 5.2 in \cite{Pappacena}.

\section{Eigenvector Decomposition}\label{evd}
Let $d$ be a positive integer. In an associative algebra $A$ over a field $F$, a non-central element $x \in A$ satisfying $x^d \in F$ and $x^k \not \in F$ for any $1\leq k < d$ is called $d$-central. The main tool we will be using in Section 3 to analyze the structure of the generalized Clifford algebra is the eigenvector decomposition of elements with respect to conjugation by a $d$-central element. The following lemma is taken from \cite{Chapman}.

\begin{lem}\label{eigencharnot}
In an associative algebra $A$ over a field $F$ of characteristic prime to $d$ containing a primitive $d$th root of unity $\rho$, if $x$ is a $d$-central element with $x^d\neq {0}$ then for every $y \in A$, there exist elements $y_0, y_1, \dots, y_{d-1}$ such that $y=y_0+ y_1+\dots+y_{d-1}$ and $y_k x=\rho^k x y_k$.
\end{lem}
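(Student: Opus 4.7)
The plan is to exhibit the decomposition as the spectral decomposition of the inner automorphism defined by conjugation by $x$, which is well-defined and diagonalizable precisely under the hypotheses of the lemma.

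First I would observe that $x$ is invertible in $A$. Since $x$ is $d$-central we have $x^d = c \in F$ with $c \neq 0$, so $x \cdot (c^{-1} x^{d-1}) = 1$ and $x^{-1} = c^{-1} x^{d-1}$. This lets me define the $F$-linear map $\phi\colon A \to A$ by $\phi(y) = x^{-1} y x$. A direct computation gives
\[
\phi^d(y) = x^{-d} y x^d = c^{-1} y c = y,
\]
so $\phi^d = \mathrm{id}_A$; in particular the minimal polynomial of $\phi$ divides $T^d - 1$.

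Next I would use the hypothesis that $F$ has characteristic prime to $d$ and contains a primitive $d$th root of unity $\rho$. Under this hypothesis, $T^d - 1$ splits over $F$ into $d$ distinct linear factors $\prod_{k=0}^{d-1}(T - \rho^k)$, hence the minimal polynomial of $\phi$ is a product of distinct linear factors, and $\phi$ is diagonalizable over $F$. Consequently
\[
A = \bigoplus_{k=0}^{d-1} A_k, \qquad A_k = \{y \in A : \phi(y) = \rho^k y\},
\]
and the eigenvalue condition $x^{-1}y_k x = \rho^k y_k$ rewrites exactly as $y_k x = \rho^k x y_k$, which is the required commutation relation.

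Finally, to make the decomposition explicit I would use the standard projector formula onto the $\rho^k$-eigenspace,
\[
y_k = \frac{1}{d}\sum_{j=0}^{d-1} \rho^{-kj}\,\phi^j(y),
\]
which is legitimate since $d$ is a unit in $F$. A direct check shows $\phi(y_k) = \rho^k y_k$ and $\sum_{k=0}^{d-1} y_k = y$ (the inner sum $\sum_k \rho^{-kj}$ equals $d\,\delta_{j,0}$), producing the required elements $y_0, \dots, y_{d-1}$. The only conceptual step is recognising that invertibility of $x$, together with the hypotheses on the characteristic and on $\rho$, force $\phi$ to be simultaneously diagonalizable with eigenvalues among the $d$th roots of unity; once that is in place the rest is a routine application of the standard averaging projectors, so I do not expect a real obstacle here.
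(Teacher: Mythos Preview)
Your proposal is correct and takes essentially the same approach as the paper: the paper simply writes down the projector formula $y_k=\frac{1}{d}\sum_{j=0}^{d-1}\rho^{kj}x^{j}yx^{-j}$ and asserts the two required properties, which is exactly your averaging formula (your $\phi^j(y)=x^{-j}yx^{j}$, so the two expressions agree after reindexing $j\mapsto -j$). Your added explanation via diagonalizability of the conjugation map is a helpful gloss, but not a different method.
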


\begin{proof}
Take $y_k=\frac{1}{d}(y+\rho^k x y x^{-1}+\dots+\rho^{k (d-1)} x^{d-1} y x^{1-d})$ for $0 \leq k \leq d-1$. It is easy to see that $y=y_0+ y_1+\dots+y_{d-1}$ and $y_k x=\rho^k x y_k$.
\end{proof}

In case $F$ is of characteristic prime to $d$ and contains a primitive $d$th root of unity $\rho$, $d$-central elements play an important role in cyclic algebras of degree $d$ over $F$.
Every cyclic algebra over $F$ of degree $d$ has a symbol presentation
$$(\alpha,\beta)_{d,F}=F\langle x,y : x^d=\alpha, y^d=\beta, y x y^{-1}=\rho x\rangle$$ for some $\alpha,\beta \in F^\times$.
The elements $x$ and $y$ are clearly $d$-central. In fact, for every $d$-central element $x$ in such an algebra there exists another $d$-central element $y$ satisfying $y x y^{-1}=\rho x$, and so the algebra has the symbol presentation $(x^d,y^d)_{d,F}$. Furthermore, if $x,y$ are invertible elements such that $y x y^{-1}=\rho x$, then they are $d$-central and the algebra has the symbol presentation $(x^d,y^d)_{d,F}$.

In characteristic $p> 0$, there are two interesting types of elements, the Artin-Schreier elements, that is, elements that satisfy an equation of the form $x^p-x=\alpha$ for some $\alpha \in F$, and $p$-central elements as defined above. In this case, however, the $p$-central elements generate purely inseparable field extensions over the base field, while the Artin-Schreier elements generate Galois field extensions.

The following two lemmas are the results of decomposition of elements with respect to these two special types of elements, and will be used in Section 4. We write $[\mu,\nu]_1=[\mu,\nu]=\nu \mu-\mu \nu$ and define $[\mu,\nu]_k$ for $k>1$ inductively as $[[\mu,\nu]_{k-1}, \nu]=\nu [\mu,\nu]_{k-1}-[\mu,\nu]_{k-1} \nu$. Also, $[\mu,\nu]_0$ is defined to be $\mu$.

\begin{lem}\label{decomcharp}
In an associative algebra $A$ over a field $F$ of characteristic $p$,
if $x$ is Artin-Schreier then for every $z \in A$, there exist $\set{z_k : k \in \mathbb{Z}_p}$ such that $z=z_0+z_1+\dots+z_{p-1}$ and $[z_k,x]=k z_k$. By taking $t_{p-k}=z_k$, we have $z=t_0+\dots+t_{p-1}$ and $[x,t_k]=k t_k$.
\end{lem}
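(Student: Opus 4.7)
The plan is to recognize the claim as an eigenspace decomposition of the $F$-linear operator $D \colon A \to A$ defined by $D(z) = xz - zx$. Using the paper's convention $[z_k, x] = xz_k - z_k x$, the required identity $[z_k, x] = k z_k$ asserts exactly that $z_k$ is a $k$-eigenvector of $D$, so the lemma reduces to showing that $A$ splits as the direct sum of eigenspaces of $D$ corresponding to the eigenvalues $0, 1, \dots, p-1 \in \mathbb{F}_p \subseteq F$.

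First I would establish the identity $D^p = D$ on $A$. Writing $D = L_x - R_x$, where $L_x$ and $R_x$ denote left and right multiplication by $x$, these two operators commute on $A$; therefore, in characteristic $p$, $D^p = (L_x - R_x)^p = L_x^p - R_x^p = L_{x^p} - R_{x^p}$. Since $x^p = x + \alpha$ with $\alpha \in F$ central, the scalar $\alpha$ contributes the same operator $\alpha \cdot \mathrm{id}$ to both $L_{x^p}$ and $R_{x^p}$, so these contributions cancel and we obtain $D^p = L_x - R_x = D$.

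With $D^p = D$ in hand, the operator $D$ is annihilated by the polynomial $t^p - t = \prod_{k=0}^{p-1}(t - k)$, which factors into distinct linear factors over $\mathbb{F}_p$. By the Chinese Remainder Theorem, or equivalently by Lagrange interpolation, the space $A$ decomposes as $\bigoplus_{k=0}^{p-1} \ker(D - k \cdot \mathrm{id})$, with the projection onto the $k$-th summand given explicitly by $e_k(D)$, where $e_k(t) = \prod_{j \neq k} (t - j)/(k - j)$. Setting $z_k = e_k(D)(z)$ then yields $z = \sum_k z_k$ together with $(D - k) z_k = 0$, which is the desired decomposition. The auxiliary reindexing $t_{p-k} = z_k$ at the end of the statement is immediate, since $[x, y] = -[y, x]$ turns $[z_k, x] = k z_k$ into $[x, t_{p-k}] = -k z_k = (p-k) t_{p-k}$ modulo $p$.

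The only nontrivial ingredient is the identity $D^p = D$, and this is a short operator-algebra calculation rather than a serious obstacle; everything else is elementary linear algebra over $F$. Nothing in the argument uses properties of Artin-Schreier elements beyond the relation $x^p - x \in F$, which is precisely what is needed to kill the scalar correction in the computation of $D^p$.
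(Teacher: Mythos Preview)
Your proof is correct and is essentially the same as the paper's, just expressed more conceptually: the paper writes down explicit formulas $z_0=z-[z,x]_{p-1}$ and $z_k=-(k^{p-2}[z,x]_1+\dots+k[z,x]_{p-2}+[z,x]_{p-1})$ for $1\le k\le p-1$ and verifies them directly, and these are precisely your Lagrange interpolation projections $e_k(D)(z)$ once one expands $e_k(t)=1-(t-k)^{p-1}$ in powers of $t$ over $\mathbb{F}_p$. Your argument has the advantage of making clear where the formulas come from (namely $D^p=D$), whereas the paper simply presents them.
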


\begin{proof}
Let $z_0=z-[z,x]_{p-1}$, and for $1 \leq k \leq p-1$, $z_k=-(k^{p-2} [z,x]_1+\dots+k [z,x]_{p-2}+[z,x]_{p-1})$.
An easy calculation shows that $[z_k,x]=k z_k$. It is obvious that $z_0+z_1+\dots+z_{p-1}=z$.
\end{proof}

\begin{lem}\label{decomcharp2}
In an associative algebra $A$ over a field $F$ of characteristic $p$,
if $y$ is $p$-central then for every $z \in A$, there exist $\set{z_k : k \in \mathbb{Z}_p}$ such that $[z_0,y]=0$, $[z_k,y]=z_{k-1}$ for each $k \neq 0$, and $z=z_{p-1}-z_{p-2}$.
\end{lem}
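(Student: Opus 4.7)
The approach is to realize the family $\{z_k\}$ as successive tails of the iterated commutators $[z,y]_j = \delta^j(z)$, where $\delta$ denotes the inner derivation $\delta(w) = [w,y]$. The critical preliminary fact is that $\delta^p = 0$ on $A$. Writing $\delta = L_y - R_y$ for the operators of left and right multiplication by $y$, the two maps $L_y$ and $R_y$ commute by associativity, so in characteristic $p$ the freshman's dream gives
\[ \delta^p = (L_y - R_y)^p = L_y^p - R_y^p = L_{y^p} - R_{y^p}, \]
and this vanishes because $y$ is $p$-central, i.e.\ $y^p \in F$ is central in $A$.

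Granting $\delta^p = 0$, I would define
\[ z_k = \sum_{j=p-1-k}^{p-1} \delta^j(z), \qquad 0 \leq k \leq p-1. \]
Verification of the three required properties is then routine. For $z_0 = \delta^{p-1}(z)$, we have $[z_0, y] = \delta(z_0) = \delta^p(z) = 0$. For $k \neq 0$,
\[ [z_k, y] = \sum_{j=p-k}^{p} \delta^j(z) = \sum_{j=p-k}^{p-1} \delta^j(z) = z_{k-1}, \]
the top term $\delta^p(z)$ dropping out. Finally, telescoping gives $z_{p-1} - z_{p-2} = \sum_{j=0}^{p-1} \delta^j(z) - \sum_{j=1}^{p-1} \delta^j(z) = \delta^0(z) = z$.

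The only conceptual step is the identity $\delta^p = 0$; everything else is formal bookkeeping on the telescoping sums, and I do not anticipate a real obstruction. This identity is exactly what distinguishes $p$-central elements from arbitrary ones, and it plays, in the inseparable setting, the role analogous to the diagonalizability of $\mathrm{ad}(y)$ via $d$th roots of unity that drove \Lref{eigencharnot}.
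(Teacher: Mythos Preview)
Your construction is exactly the paper's: the paper sets $z_0=[z,y]_{p-1}$ and, for $k\neq 0$, $z_k=[z,y]_{p-1-k}+\dots+[z,y]_{p-1}$, which in your notation is precisely $z_k=\sum_{j=p-1-k}^{p-1}\delta^j(z)$. The only difference is that the paper declares the verification ``clear'' without isolating the identity $\delta^p=0$, whereas you make that step explicit via $(L_y-R_y)^p=L_{y^p}-R_{y^p}=0$; this is a welcome addition rather than a departure.
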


\begin{proof}
For $k \neq 0$, let $z_k=[z,y]_{p-1-k}+[z,y]_{p-k}+\dots+[z,y]_{p-1}$.
Let $z_0=[z,y]_{p-1}$.
It is clear that they satisfy the requirements.
\end{proof}

Artin-Schreier elements and $p$-central elements play an important role in cyclic algebras of degree $p$ over $F$.
Every cyclic algebra over $F$ of degree $p$ has a symbol presentation
$$[\alpha,\beta)_{p,F}=F\langle x,y : x^p-x=\alpha, y^p=\beta, y x y^{-1}=x+1\rangle$$
for some $\alpha,\beta \in F$.
The element $x$ is clearly an Artin-Schreier element and the element $y$ is $p$-central.
In fact, for every Artin-Schreier element $x$ in such an algebra there exists some $p$-central element $y$ satisfying $y x y^{-1}=x+1$, and so the algebra has the symbol presentation $[x^p-x,y^p)_{p,F}$. Similarly, for every $p$-central element $y$ there exists an Artin-Schreier element $x$ such that $y x y^{-1}=x+1$. Furthermore, if there exist two elements $x$ and $y$ satisfying $y x y^{-1}=x+1$, then $x$ is Artin-Schreier, $y$ is $p$-central and the algebra has the symbol presentation $[x^p-x,y^p)_{p,F}$.

\section{Characteristic not three}\label{Seccharnot3}
In this section we study the generalized Clifford algebra of the
monic polynomial $\Phi(Z,X,Y)=Z^3-r Y Z^2-(e XY+t Y^2) Z-(\alpha
X^3+\beta X^2 Y+\gamma X Y^2+\delta Y^3)$ over the field $F$
assuming the characteristic of $F$ is not 3. As mentioned in the
introduction, the case of $r=t=0$ was studied in \cite{Kuo}, where
the simple homomorphic images were described explicitly only in the
diagonal case $\beta=\gamma=0$. Here we will apply the eigenvector
decomposition to provide formulas for the simple homomorphic images
of $C_{\Phi}$.

For the sake of simplicity, we adopt the notation used in \cite{Revoy}:
$x_1^{d_1} \ast \dots \ast x_t^{d_t}$ denotes the
sum of all the products where each $x_i$ appears $d_i$ times. For
example, $x^2*z^2 = xxzz+xzxz+xzzx+zxxz+zxzx+zzxx$. As usual we may
omit exponents $d_i = 1$, so that $x^2*y = xxy+xyx+yxx$. This
notation is commutative in the sense that $x_1^{d_1} \ast \dots
\ast x_t^{d_t} = x_{\s(1)}^{d_{\s(1)}} * \cdots *
x_{\s(t)}^{d_{\s(t)}}$ for any permutation $\s\in S_t$.

The algebra $C_\Phi$ is by definition
\begin{equation*}
\begin{aligned}
C_\Phi=F\langle x,y\colon &  x^3=\alpha,\\
 & x^2 * y=r x^2+e x+\beta,\\
& x * y^2=r x y+r y x+t x+e y+\gamma,\\
 & y^3=r y^2+t y+\delta\rangle.
\end{aligned}
\end{equation*}
We assume that $\alpha\neq 0$ and $F$ contains a primitive third root of unity $\rho$. According to Lemma \ref{eigencharnot}, since $x^3 \in F^{\times}$, there exist $y_0, y_1, y_2\in C_{\Phi}$ such that
\begin{equation}\label{decomp}
y=y_0+y_1+y_2\ \textrm{ and }\ y_i x=\rho^{i} x y_i.
\end{equation} In this case, we say that $y_i$ $\rho^i$-commutes with $x$. Under this decomposition, the relation $x^2 * y=r x^2+e x+\beta$ is equivalent to  $y_0=(3 \alpha)^{-1} (e x^2+\beta x+\alpha r)$.
Substituting this in the relation $x * y^2=r x y+r y x+t x + e y+\gamma$, we obtain by a straight-forward calculation that
\begin{equation}\label{y1y2}
y_1 y_2=\rho y_2 y_1+\frac{(1-\rho)D_1}{3 \alpha} x^2-\frac{(1-\rho)D_2}{9 \alpha}
\end{equation}
where $$D_1=\gamma+\frac{e r}{3} -\frac{\beta^2}{3 \alpha}\ \textrm{ and }\ D_2= e \beta - 3 \alpha t-\alpha r^2.$$
Let $$w=x^{-1} y_2 y_1+\rho^2 \frac{D_1}{3 \alpha} x+\frac{D_2}{9 \alpha} x^{-1}.$$

\begin{lem}\label{central}
The elements $w,y_1^3$ and $y_2^3$ are in the center of $C_\Phi$.
\end{lem}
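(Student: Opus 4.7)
My plan is to prove centrality of each of $w$, $y_1^3$, $y_2^3$ by checking commutation with a minimal generating set of $C_\Phi$. Since the relation $x^2 \ast y = rx^2 + ex + \beta$ forces $y_0 = (3\alpha)^{-1}(ex^2 + \beta x + \alpha r) \in F[x]$ and $y = y_0 + y_1 + y_2$, the algebra is generated by $x$, $y_1$, and $y_2$. Therefore, for each $z \in \{w, y_1^3, y_2^3\}$, it suffices to verify $[z,x] = [z,y_1] = [z,y_2] = 0$.

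Commutation with $x$ will be immediate from $y_i x = \rho^i x y_i$: the cubes $y_1^3$ and $y_2^3$ satisfy $y_i^3 x = \rho^{3i} x y_i^3 = x y_i^3$, while the product $y_2 y_1$ (hence also $x^{-1} y_2 y_1$) commutes with $x$ because $(y_2 y_1)x = \rho^2\rho\, x(y_2 y_1) = x(y_2 y_1)$; the remaining summands of $w$ lie in $F[x,x^{-1}]$ and commute with $x$ trivially. For $[y_1^3, y_2]$ I will apply \eq{y1y2}, written as $y_1 y_2 = \rho y_2 y_1 + C$ with $C \in F[x,x^{-1}]$, three times to obtain $y_1^3 y_2 = \rho^3\, y_2 y_1^3 + (y_1^2 C + \rho\, y_1 C y_1 + \rho^2\, C y_1^2)$; pushing all $x$-factors to the left via $y_1 x^k = \rho^k x^k y_1$ shows the correction collects onto $x^2 y_1^2$ and $y_1^2$ with coefficients proportional to $1 + \rho + \rho^2 = 0$, and so vanishes. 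A symmetric calculation based on the rewriting $y_2 y_1 = \rho^2 y_1 y_2 - \rho^2 C$ yields $[y_2^3, y_1] = 0$.

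The delicate step will be $[w,y_1] = [w,y_2] = 0$. For $[w,y_1]$, I will expand $wy_1 - y_1 w$ directly using $y_1 x^{-1} = \rho^2 x^{-1} y_1$, $y_1 x = \rho x y_1$, and $y_1 y_2 y_1 = \rho y_2 y_1^2 + C y_1$; the commutator should reduce to a linear combination of $x y_1$ and $x^{-1} y_1$, and I expect the specific coefficients $\rho^2 D_1/(3\alpha)$ and $D_2/(9\alpha)$ appearing in $w$ to be chosen precisely so that these terms are cancelled by the contributions coming from $-\rho^2 x^{-1} C y_1$ (the key identity being $\rho^3 = 1$). For $[w, y_2]$ I will first convert $w$ via \eq{y1y2} to the equivalent form $w = \rho^2 x^{-1} y_1 y_2 + (D_1/(3\alpha)) x + (\rho^2 D_2/(9\alpha)) x^{-1}$ and then repeat the calculation with the roles of $y_1, y_2$ and of $\rho, \rho^2$ interchanged. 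The principal obstacle will be the bookkeeping in this step: the coefficients in $w$ are finely tuned so that the commutators with $y_1$ and $y_2$ vanish, and every power of $\rho$ must be tracked carefully to confirm the necessary cancellations.
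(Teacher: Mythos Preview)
Your proposal is correct and follows essentially the same approach as the paper: both verify commutation of $w$, $y_1^3$, $y_2^3$ with the generators $x$, $y_1$, $y_2$ using the $\rho$-commutation rules and Equation~\eq{y1y2}. The paper's own proof is extremely terse (``Clearly, $w$ commutes with $x$. By Equation~\eq{y1y2} we see that $y_i w = w y_i$\dots Similarly, one can check that $y_1^3$ and $y_2^3$ are central''), whereas you have spelled out the computations in full, including the key rewriting of $w$ as $\rho^2 x^{-1} y_1 y_2 + (D_1/3\alpha) x + (\rho^2 D_2 / 9\alpha) x^{-1}$ for the $[w,y_2]$ check.
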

\begin{proof}
Clearly, $w$ commutes with $x$. By Equation \eqref{y1y2} we see that $y_i w = wy_i$, $i=1, 2$. Thus, $w$ commutes with $y$ and hence is central in $C_\Phi$. Similarly, one can check that $y_1^3$ and $y_2^3$ are central in $C_\Phi$.
\end{proof}

The relation $y^3=r y^2+t y+\delta$ may be split into three parts due to conjugation by $x$. The part on the left-hand side which $\rho$-commutes with $x$ is $y_0^2 * y_1+y_1^2 * y_2+y_2^2 * y_0$ and on the right-hand side it is $r y_0 * y_1+r y_2^2+t y_1$.
A direct computation shows that $y_0^2 * y_1=(3\alpha)^{-1}e\beta y_1 + 3^{-1}r^2 y_1- (3 \alpha)^{-1}\rho e r x^2 y_1-(3 \alpha)^{-1}\rho^2 \beta r x y_1$,   $y_1^2 * y_2=-(3 \alpha)^{-1}D_2 y_1$, $y_2^2 * y_0=r y_2^2$ and $r y_0 * y_1=3^{-1}2 r^2 y_1-(3 \alpha)^{-1}\rho e r x^2 y_1-(3 \alpha)^{-1}\rho^2 \beta r x y_1$. Thus, $y_0^2 * y_1+y_1^2 * y_2+y_2^2 * y_0$ is equal to $r y_0 * y_1+r y_2^2+t y_1$. Similarly, the part on the left-hand side which $\rho^2$-commutes with $x$ is equal to that on the right-hand side: $y_0^2 * y_1+y_1^2 * y_2+y_2^2 *  y_0 = r y_0 * y_2+r y_1^2+ty_2$.
So let us consider the parts on both sides which commute with $x$:
\begin{equation}\label{y^3}
\left(\frac{1}{3 \alpha} (e x^2+\beta x+\alpha r)\right)^3+y_1^3+y_2^3+\frac{1}{3 \alpha} (e x^2+\beta x+\alpha r) * y_1 * y_2=r y_0^2+r y_1 * y_2+t y_0 +\delta.
\end{equation}
We first compute
\begin{equation*}
\begin{aligned}
& (e x^2+\beta x) * y_1 * y_2\\
=\ & e((2+\rho^2)x^2 y_1 y_2 +(2+\rho) x^2 y_2 y_1) + \beta((2+\rho)xy_1 y_2 + (2+\rho ^2)xy_2 y_1)\\
=\ &  e\left(-3\rho^2 x^2 y_2 y_1-\rho D_1 x+\frac{\rho D_2}{3\alpha}x^2\right) + \beta\left(D_1-\frac{D_2}{3 \alpha} x\right) \\
=\ & e\left(-3\rho^2 \alpha w-\frac{D_2}{3 \alpha} x^2\right) +\beta\left(D_1-\frac{D_2}{3 \alpha} x\right)\\
\end{aligned}
\end{equation*}
where the second equality holds by applying Equation \eqref{y1y2}. Substituting this in Equation \eqref{y^3} we then obtain by another straight-forward calculation that
\begin{equation}\label{y1^3y2^3}
D+y_1^3+y_2^3-\rho^2 e w=0,
\end{equation}
where $$D=\frac{e^3}{27 \alpha}+\frac{\beta^3}{27 \alpha^2}-\frac{2 r^3}{27}+\frac{\beta}{3 \alpha} D_1-\frac{r t}{3}-\delta.$$
Consequently, via the decomposition in \eqref{decomp} with $y_0$ taken as $(3 \alpha)^{-1} (e x^2+\beta x+\alpha r)$, $C_{\Phi}$ is an $F$-algebra generated by $x, y_1, y_2$ subject to the relations $x^3=\alpha$, $y_i x=\rho^i xy_i$ and Equations \eqref{y1y2}, \eqref{y1^3y2^3}. Thus we have the following result.

\begin{lem}\label{27fg}
As an  $F[y_1^3, w]$-module, $C_\Phi$ is finitely generated by the 27 elements $x^i y_1^j y_2^k$, where $0\leq i,j,k\leq 2$.
\end{lem}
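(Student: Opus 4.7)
The plan is to show that every element of $C_\Phi$ can be rewritten as an $F[y_1^3, w]$-linear combination of the $27$ monomials $x^i y_1^j y_2^k$, $0\le i,j,k\le 2$, by a normal-form rewriting procedure that uses the available relations in a controlled order.

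Since $y_0\in F[x]$, the decomposition \eqref{decomp} shows that $C_\Phi$ is generated as an $F$-algebra by $x,y_1,y_2$, hence is spanned over $F$ by words in these three letters. The first step is to move every $x$ in such a word to the far left via $y_i x=\rho^{i} x y_i$, and then to reduce the resulting $x$-exponent modulo $3$ using $x^3=\alpha$. This reduces the spanning set to monomials of the form $x^a M$ where $0\le a\le 2$ and $M$ is a word in $y_1,y_2$.

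The second step is to sort $M$ into the normal form $y_1^j y_2^k$ using \eqref{y1y2}, which I rewrite as $y_2 y_1=\rho^{-1}y_1 y_2 - \rho^{-1}P(x)$ with $P(x)\in F[x]$. Swapping an adjacent inverted pair $y_2 y_1$ reduces the number of inversions by one; the correction $-\rho^{-1}P(x)$ has no $y_i$ factor at all, so the word it produces has strictly smaller total $y$-length. A double induction on total $y$-length (outer) and on the number of inversions (inner) then reduces each $x^a M$ to an $F$-linear combination of monomials $x^a y_1^j y_2^k$ with $0\le a\le 2$ and $j,k\ge 0$, re-running the first step after each swap to absorb any new $x$'s introduced by the correction.

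The third step reduces the $y_1$- and $y_2$-exponents modulo $3$. Equation \eqref{y1^3y2^3} rewrites $y_2^3=\rho^2 e w - y_1^3 - D$, so whenever $k\ge 3$ I can replace $y_2^k$ by a sum of terms of the form $(\text{polynomial in }y_1^3,w)\cdot y_2^{k-3}$, iterating until $0\le k\le 2$. By \Lref{central}, both $y_1^3$ and $w$ are central, so these new factors commute past $x$, $y_1$ and $y_2$ and are absorbed into the coefficient ring $F[y_1^3,w]$. Similarly, when $j\ge 3$ I factor $y_1^j=(y_1^3)^{\lfloor j/3\rfloor}y_1^{j\bmod 3}$ and push $(y_1^3)^{\lfloor j/3\rfloor}$ to the left into the coefficient ring, bringing $j$ into $\{0,1,2\}$.

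The main obstacle to watch is termination: sorting $y_1,y_2$ in the second step could in principle interact with the pushing of $x$'s from the first step and create an endless loop. What makes the argument close cleanly is exactly the fact that the correction term in \eqref{y1y2} lies in $F[x]$ and involves no $y_i$; this guarantees that the nested induction on total $y$-length strictly decreases at each swap, and that the reductions of the third step only ever lower the exponents of $y_1$ and $y_2$ while enlarging the coefficient ring.
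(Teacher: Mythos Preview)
Your argument is correct and is exactly the normal-form reduction the paper has in mind: the sentence just before \Lref{27fg} records that $C_\Phi$ is generated by $x,y_1,y_2$ subject to $x^3=\alpha$, $y_ix=\rho^i xy_i$, \eqref{y1y2} and \eqref{y1^3y2^3}, and the paper then states the lemma without further proof, leaving the straightening you spell out (push $x$'s left, sort $y_2y_1\to y_1y_2$ via \eqref{y1y2}, reduce exponents modulo $3$ using centrality of $y_1^3,w$ and \eqref{y1^3y2^3}) as implicit. Your termination analysis is the right one, since the correction term in \eqref{y1y2} lies in $F[x]$ and so strictly lowers total $y$-length.
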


Let us consider the algebra after the localization $C_\Phi[y_1^{-3}]$.
Since in this algebra $y_1$ is invertible, from the choice of $w$, we have
\begin{equation}\label{y2}
y_2=x y_1^{-1} w-\rho^2 \frac{D_1}{3 \alpha} x^2 y_1^{-1}-\frac{D_2}{9 \alpha} y_1^{-1},
\end{equation}
and so
\begin{equation*}
\begin{aligned}
  & y_2^3 
=\alpha y_1^{-3} w^3-\frac{D_1^3}{27 \alpha} y_1^{-3}-\frac{D_2^3}{729 \alpha^3} y_1^{-3}-\rho^2 \frac{D_1D_2}{9 \alpha} wy_1^{-3}.
\end{aligned}
\end{equation*}
Therefore, substituting this in Equation \eqref{y1^3y2^3} we get
$$D+y_1^3+\alpha y_1^{-3} w^3-\frac{D_1^3}{27 \alpha} y_1^{-3}-\frac{D_2^3}{729 \alpha^3} y_1^{-3}-\rho^2 \frac{D_1D_2}{9 \alpha} wy_1^{-3}-\rho^2 e w=0.$$
Consequently,
\begin{equation}\label{y1^3w}
(D-\rho^2 e w) y_1^3+y_1^6+\alpha w^3-\frac{D_1^3}{27 \alpha}-\frac{D_2^3}{729 \alpha^3}-\rho^2 \frac{D_1D_2}{9 \alpha} w=0.\end{equation}
The last equality also holds in $C_\Phi$.

We next show that the center $Z$ of $C_\Phi$ is $F[y_1^3,w]$ and it is isomorphic to the coordinate ring of the affine elliptic curve
\begin{equation}\label{E}
E\colon (D-\rho^2 e R) S+S^2+\alpha R^3-\frac{D_1^3}{27 \alpha}-\frac{D_2^3}{729 \alpha^3}-\rho^2 \frac{D_1D_2}{9 \alpha} R=0,
\end{equation}
where the discriminant is assumed to be nonzero. Let $E$ also denote the elliptic curve with affine piece given by Equation \eqref{E}.

\begin{prop}\label{isom}
There is an $F$-algebra isomorphism from $C_\Phi[y_1^{-3}]$ into the symbol algebra $(\alpha, S)_{3,F(E)}$ over the function field $F(E)$ of the elliptic curve $E$.
\end{prop}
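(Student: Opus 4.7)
My plan is to write $\varphi$ explicitly on generators, verify the defining relations via the eigenvector decomposition, and then establish injectivity by a rank count. Let $X, Y$ be the standard generators of $(\alpha, S)_{3, F(E)}$ satisfying $X^3 = \alpha$, $Y^3 = S$, and $YX = \rho XY$. I define $\varphi(x) = X$, $\varphi(y_1) = Y$, $\varphi(w) = R$, and, motivated by \Eq{y2},
\[
\varphi(y_2) = XY^{-1}R - \rho^2 \tfrac{D_1}{3\alpha} X^2 Y^{-1} - \tfrac{D_2}{9\alpha} Y^{-1},
\]
so that $\varphi(y) = \varphi(y_0) + Y + \varphi(y_2)$ with $\varphi(y_0) = (3\alpha)^{-1}(eX^2 + \beta X + \alpha r)$.

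Checking well-definedness reduces to verifying each defining relation of $C_\Phi$ after projection onto the $1$-, $\rho$-, and $\rho^2$-eigenspaces for conjugation by $x$. The relation $x^2 \ast y = rx^2 + ex + \beta$ just recovers $\varphi(y_0)$; the $\rho$-part of $x \ast y^2 = rxy + ryx + tx + ey + \gamma$ is \Eq{y1y2}, which is a short direct computation from the defining formula for $\varphi(y_2)$ using $YX = \rho XY$ and the centrality of $R$; the $\rho$- and $\rho^2$-parts of $y^3 = ry^2 + ty + \delta$ were already shown (in the discussion leading to \Eq{y^3}) to be formal consequences of \Eq{y1y2}; and the final $1$-part is \Eq{y1^3y2^3}, which after substitution of $\varphi(y_2)^3$ becomes exactly \Eq{E}, which holds in $F(E)$ by construction of the curve.

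For injectivity, \Lref{27fg} combined with \Eq{y2} implies that $C_\Phi[y_1^{-3}]$ is spanned, as a module over the localized central subring $F[y_1^3, w][y_1^{-3}]$, by the nine monomials $\{x^i y_1^j : 0 \le i, j \le 2\}$. Their images $\{X^i Y^j\}$ form an $F(E)$-basis of the central simple algebra $(\alpha, S)_{3, F(E)}$, so any kernel element yields a nontrivial $F[y_1^3, w][y_1^{-3}]$-linear dependence among the $x^i y_1^j$ and, after applying $\varphi$, an $F(E)$-linear dependence among the $X^i Y^j$ --- impossible. The subtle point, which I expect to be the main obstacle, is that this reduction requires $\varphi$ to be injective on the central subring, equivalently that the only polynomial relation between $y_1^3$ and $w$ inside $C_\Phi$ is \Eq{y1^3w}; I would handle this by exhibiting a $9$-dimensional representation of $C_\Phi[y_1^{-3}]$ on the free $F(E)$-module with basis $\{X^i Y^j\}$ in which $y_1^3$ and $w$ act as the coordinates $S$ and $R$, making independence modulo \Eq{E} manifest.
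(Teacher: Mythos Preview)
Your proposal is correct and follows essentially the same route as the paper: define the map on $x,y_1,y_2$ by the same formulas, check the relations \eq{y1y2} and \eq{y1^3y2^3}, and deduce injectivity from a finite-generation statement over a central subring. The only real difference is bookkeeping in the injectivity step. The paper works over the one-variable subring $F[y_1^{\pm 3}]$ with the $27$ generators $x^iy_1^jw^k$, so injectivity on the base is immediate from the transcendence of $S$ in $F(E)$; you instead work over $F[y_1^{\pm 3},w]$ with the $9$ generators $x^iy_1^j$, which forces you to argue that the only relation between $y_1^3$ and $w$ is \eq{y1^3w}. Your handling of that ``subtle point'' is fine but slightly circular in presentation: the $9$-dimensional representation you propose to exhibit \emph{is} $\varphi$ itself, so the cleaner phrasing is simply that $\varphi(y_1^3)=S$, $\varphi(w)=R$, hence any polynomial relation $p(y_1^3,w)=0$ forces $p(S,R)=0$ in $F(E)$ and therefore $p$ lies in the (prime) ideal of $E$. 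The paper's choice of base ring sidesteps this little argument, but both versions are valid.
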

\begin{proof}
Let $u, v$ be the generators of $(\alpha, S)_{3,F(E)}$ satisfying $u^3=\alpha, v^3=S$ and $vu=\rho uv$.
Let $\phi$ be the $F$-algebra homomorphism from $C_\Phi$ into $(\alpha, S)_{3, F(E)}$ defined as follows
\begin{equation*}
\begin{aligned}
\phi\colon C_\Phi\
 & \rightarrow\  (\alpha, S)_{3,F(E)}\\
x\ & \mapsto\  u \\
y_1\ & \mapsto\  v\\
y_2\ & \mapsto\  u\left(R-\rho^2\frac{D_1}{3\alpha}u-\frac{D_2}{9\alpha}u^{-1}\right)v^{-1}.
\end{aligned}
\end{equation*}
One can check that $x^3=\alpha, y_i x=\rho^i xy_i$ and the relations in Equations \eqref{y1y2} and \eqref{y1^3y2^3} are preserved under the map $\phi$. Thus it is well-defined and $\phi(w)=R$. Furthermore, it induces a homomorphism from $C_\Phi[y_1^{-3}]$ to $(\alpha, S)_{3, F(E)}$, which we also denote by $\phi$.

Notice that from Equations \eqref{y2} and \eqref{y1^3w}, $C_\Phi[y_1^{-3}]$ as an $F[y_1^{\pm 3}]$-module is finitely generated by the 27 elements $x^i y_1^j w^k$, where $0\leq i,j,k\leq 2$. Since the images of these elements are linearly independent over $F[S^{\pm 1}]$ and $\phi$ when restricted to $F[y_1^{\pm 3}]$ is injective, it follows that $\phi$ itself is injective.
\end{proof}

\begin{cor}\label{Z}
The center $Z$ of $C_\Phi$ is $F[y_1^3, w]$ and it is isomorphic to the coordinate ring $F[E]$ of the affine elliptic curve $E$.
\end{cor}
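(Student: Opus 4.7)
The plan is to establish both $F[y_1^3,w] \subseteq Z$ and $Z \subseteq F[y_1^3,w]$, and simultaneously to identify $F[y_1^3,w]$ with $F[E]$. The easy inclusion $F[y_1^3,w] \subseteq Z$ is immediate from Lemma \ref{central}. Since Equation \eqref{y1^3w} says that $w$ and $y_1^3$ satisfy precisely the defining relation of $E$ under the substitution $R\leftrightarrow w$, $S\leftrightarrow y_1^3$, there is a natural surjective $F$-algebra homomorphism $\psi\colon F[E]\twoheadrightarrow F[y_1^3,w]$. To see that $\psi$ is injective I would compose with $\phi$ from Proposition \ref{isom}: since $\phi$ restricted to $F[y_1^{\pm 3},w]$ is injective with $\phi(w)=R$ and $\phi(y_1^3)=S$, the composition $F[E]\xrightarrow{\psi} F[y_1^3,w]\xrightarrow{\phi} F(E)$ is the tautological inclusion of the coordinate ring into its function field. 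This is injective because $F[E]$ is an integral domain under the standing nonzero-discriminant assumption, so $\psi$ is an isomorphism.

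For the reverse inclusion $Z\subseteq F[y_1^3,w]$, I would localize and use that $\phi$ embeds $C_\Phi[y_1^{-3}]$ into the symbol algebra $(\alpha,S)_{3,F(E)}$. From the linear-independence argument in the proof of Proposition \ref{isom}, the image of $C_\Phi[y_1^{-3}]$ is a free module of rank $9$ over $\phi(F[y_1^{\pm 3},w])\cong F[E]_S$, with basis $\{u^i v^j:0\leq i,j\leq 2\}$, so it is a symbol (Azumaya) algebra of rank $9$ over $F[E]_S$. Consequently $Z(C_\Phi[y_1^{-3}])=F[y_1^{\pm 3},w]$, and any $z\in Z\subseteq C_\Phi$ maps into $F[y_1^{\pm 3},w]$ under the localization.

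The main obstacle is then to rule out negative powers of $y_1^3$, i.e., to show that if $z\in C_\Phi$ has image in $F[y_1^{\pm 3},w]$ then in fact $z\in F[y_1^3,w]$. Here I would invoke Lemma \ref{27fg} and expand
\[
z=\sum_{0\leq i,j,k\leq 2} c_{ijk}\, x^i y_1^j y_2^k,\qquad c_{ijk}\in F[y_1^3,w].
\]
Since $y_r x=\rho^r x y_r$ for $r=1,2$, conjugation of $z$ by $x$ scales the $(i,j,k)$-summand by $\rho^{-j-2k}$, and commutativity of $z$ with $x$ forces $c_{ijk}=0$ unless $j+2k\equiv 0\pmod 3$. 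A second centrality constraint, obtained from the commutation of $z$ with $y_1$ (using \eqref{y1y2} and \eqref{y2} to express $y_2$ via $x,y_1,w$), would then force all remaining nonconstant terms to vanish, leaving $z=c_{000}\in F[y_1^3,w]$. The linear independence needed to draw these conclusions from the coefficient-wise equations can be verified after transporting the relations to the symbol algebra via $\phi$, where the images $u^iv^j\phi(y_2)^k$ expand in the free $F(E)$-basis $\{u^i v^j\}$ with coefficients whose $F[E]_S$-independence is visible directly.
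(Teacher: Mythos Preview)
Your argument for $F[y_1^3,w]\cong F[E]$ via the composition $F[E]\xrightarrow{\psi}F[y_1^3,w]\xrightarrow{\phi}F(E)$ is exactly the paper's reasoning. The difference lies in how you handle the reverse inclusion $Z\subseteq F[y_1^3,w]$. The paper dispatches this in one line: having shown $F[E]=\phi(F[y_1^3,w])\subseteq\phi(Z)\subseteq F(E)$, it invokes Lemma~\ref{27fg} to see that $\phi(Z)$ is a finite, hence integral, $F[E]$-module; since $F[E]$ is a Dedekind domain (this is where the nonzero-discriminant hypothesis enters) it is integrally closed in $F(E)$, forcing $\phi(Z)=F[E]$. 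Your steps 3--4 reach the same intermediate conclusion, but your step 5 replaces the integral-closure argument with an explicit eigenspace computation.

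That computational route can be made to work, but as written it has a gap. The $27$ elements $x^iy_1^jy_2^k$ are \emph{not} linearly independent over $F[y_1^3,w]$: for instance, Equation~\eqref{y1y2} together with the definition of $w$ gives $y_1y_2=\rho xw-\tfrac{\rho D_1}{3\alpha}x^2-\tfrac{D_2}{9\alpha}$, an $F[y_1^3,w]$-relation among $x^0y_1y_2,\,x,\,x^2,\,1$. So the phrase ``forces $c_{ijk}=0$ unless $j+2k\equiv 0$'' is not literally justified, and your appeal to independence of the $\phi(x^iy_1^jy_2^k)$ fails for the same reason (there are $27$ of them in a $9$-dimensional $F(E)$-space). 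What \emph{is} true is that the eigenspace projection shows $z$ equals its $C_0$-part, and the surviving products $y_1y_2$, $y_1^2y_2^2$ lie in $F[x,w]$; thus $z\in F[y_1^3,w][x]$, after which commutation with $y_1$ (now using the genuine independence of $1,x,x^2$ over $F(E)$ inside the symbol algebra) kills the $x$ and $x^2$ coefficients. So your strategy is salvageable, but the paper's integral-closure step is both shorter and avoids these subtleties entirely.
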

\begin{proof}
By Proposition \ref{isom}, $F[y_1^3, w]\cong F[E]$, a Dedekind domain. Furthermore, we see from its proof that $\phi(C_\Phi)F(E)=(\alpha, S)_{3,F(E)}$. In particular, the center of $\phi(C_\Phi)$ is contained in $F(E)$. Therefore $F[E]=\phi(F[y_1^3, w])\subseteq \phi(Z) \subseteq F(E)$. It follows from Lemma \ref{27fg} and the injectivity of $\phi$ that $Z=F[y_1^3, w]\cong F[E]$.
\end{proof}

Now the center of $C_\Phi[y_1^{-3}]$ is $Z_{(y_1^3)}=F[y_1^{\pm 3}, w]\cong F[E]_{(S)}$ in which $y_1^3$ is invertible. Thus we have the following result.

\begin{cor}\label{L1}
$C_\Phi[y_1^{-3}]$ is the symbol Azumaya algebra $(\alpha, y_1^3)_{3,F[y_1^{\pm 3},w]}$. Similarly, $C_\Phi[y_2^{-3}]=(y_2^3, \alpha)_{3,F[y_2^{\pm 3},w]}$.
\end{cor}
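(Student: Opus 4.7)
The plan is to combine the injection $\phi$ of \Pref{isom} with the explicit formula for $y_2$ given in \eq{y2}, since everything needed is already almost in place. After inverting $y_1^3$, \Cref{Z} tells us the center becomes the localization $Z_{(y_1^3)}=F[y_1^{\pm 3},w]\isom F[E]_{(S)}$, and inside $C_\Phi[y_1^{-3}]$ the pair $(x,y_1)$ satisfies $x^3=\alpha\in F^\times$, $y_1^3\in Z_{(y_1^3)}^\times$, and $y_1 x=\rho x y_1$. This is exactly the defining data of the symbol algebra $(\alpha,y_1^3)_{3,Z_{(y_1^3)}}$, so there is a canonical $Z_{(y_1^3)}$-algebra surjection
\[
\psi\colon (\alpha,y_1^3)_{3,Z_{(y_1^3)}}\twoheadrightarrow R\sub C_\Phi[y_1^{-3}],
\]
where $R$ denotes the $Z_{(y_1^3)}$-subalgebra generated by $x$ and $y_1^{\pm 1}$.

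First I would check that $R$ in fact equals $C_\Phi[y_1^{-3}]$. The key point is \eq{y2}, which expresses $y_2$ as a $Z_{(y_1^3)}$-linear combination of $1,\,x,$ and $x^2$ multiplied by $y_1^{-1}$; since additionally $y_0=(3\alpha)^{-1}(ex^2+\beta x+\alpha r)$ is a polynomial in $x$, we get $y=y_0+y_1+y_2\in R$, hence $C_\Phi\sub R$ and so $C_\Phi[y_1^{-3}]\sub R$. Combined with $R\sub C_\Phi[y_1^{-3}]$ this gives equality, so $\psi$ is surjective onto $C_\Phi[y_1^{-3}]$.

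Next I would verify injectivity of $\psi$. The symbol algebra $(\alpha,y_1^3)_{3,Z_{(y_1^3)}}$ is free of rank $9$ over $Z_{(y_1^3)}$ with basis $\{x^i y_1^j:0\le i,j\le 2\}$. Applying $\phi$ of \Pref{isom} to the images of this basis in $C_\Phi[y_1^{-3}]$ yields the monomials $u^i v^j$, which are linearly independent over $F(E)\supseteq Z_{(y_1^3)}$; therefore the nine elements are linearly independent in $C_\Phi[y_1^{-3}]$, and $\psi$ is an isomorphism. Since symbol algebras of degree $d$ over a commutative ring containing a primitive $d$th root of unity with both slots in the unit group are classical examples of Azumaya algebras of rank $d^2$, the identification $C_\Phi[y_1^{-3}]\isom (\alpha,y_1^3)_{3,F[y_1^{\pm 3},w]}$ delivers the first assertion. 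The statement for $y_2$ is obtained by running the same argument with the roles of $y_1$ and $y_2$ swapped: Equation \eq{y1y2} rewritten as $y_2 y_1=\rho^{-1}y_1 y_2+\text{(lower terms)}$ lets one solve for $y_1$ in terms of $x,y_2^{\pm 1},w$ after inverting $y_2^3$, and the same module-rank argument applies.

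The only genuine obstacle is verifying that $R$ exhausts $C_\Phi[y_1^{-3}]$, and this is disposed of by the one formula \eq{y2}; everything else is bookkeeping between the presentation of $C_\Phi$ via $(x,y_1,y_2,w)$ and the $9$-element module basis $\{x^i y_1^j\}$ that survives after the localization collapses the $w$-generators into the (now enlarged) coefficient ring.
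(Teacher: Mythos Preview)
Your argument is correct and follows essentially the same path as the paper. The paper's proof is the single sentence preceding the corollary, which observes that the center of $C_\Phi[y_1^{-3}]$ is $F[y_1^{\pm 3},w]$ and that $y_1^3$ is now a unit there; the underlying work (that $y_2$ is expressible via \eq{y2} and that the $27$ monomials $x^i y_1^j w^k$ generate $C_\Phi[y_1^{-3}]$ over $F[y_1^{\pm 3}]$, with independence detected through $\phi$) was already carried out in the proof of \Pref{isom}. You have unpacked these same ingredients explicitly---surjectivity of $\psi$ from \eq{y2} and injectivity from the independence of $\phi(x^i y_1^j)=u^i v^j$---so the approaches coincide.
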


From now on, we restrict ourselves to the following conditions:
$D\neq 0$ and the subalgebra $F[x: x^3=\alpha ]$ is a field.
\begin{prop}\label{eitheror}
In every homomorphic image of $C_\Phi$, either $y_1^3 \neq 0$ or $y_2^3 \neq 0$. In particular, if the image is simple then either $y_1^3$ or $y_2^3$ is invertible.
\end{prop}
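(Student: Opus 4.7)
The plan is a proof by contradiction: assume $C$ is a nonzero homomorphic image of $C_\Phi$ with $y_1^3 = 0 = y_2^3$, and derive $C = 0$. First, \eqref{y1^3y2^3} collapses in $C$ to $D - \rho^2 e w = 0$. If $e = 0$, this forces $D = 0$ in $C$, but $D \neq 0$ in $F$ and the structure map $F \to C$ is injective (as $F$ is a field and $C \neq 0$), a contradiction. So henceforth $e \neq 0$, which forces the central element $w$ to equal the scalar $\rho D/e \in F$ in $C$.

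With $w$ pinned down, I will express $y_1 y_2$ as a polynomial in $x$ alone inside $C$. Solving the definition of $w$ for $y_2 y_1$, substituting into \eqref{y1y2}, and then replacing $w$ by $\rho D/e$, yields
\[
y_1 y_2 \;=\; p(x) \;:=\; \frac{\rho^2 D}{e}\, x \;-\; \frac{\rho D_1}{3\alpha}\, x^2 \;-\; \frac{D_2}{9\alpha} \quad \text{in } C.
\]
The coefficient of $x$ is nonzero since $D, e \neq 0$, so $p$ is a nonzero polynomial of degree at most $2$. Inside the standing-hypothesis field $K := F[x : x^3 = \alpha]$, elements of degree at most $2$ are uniquely represented, hence $p(x) \neq 0$ in $K$.

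The key step is cubing. Since $y_1 x = \rho x y_1$, one has $y_1 f(x) = f(\rho x) y_1$ for every polynomial $f$, and iterating gives $y_1^2 f(x) = f(\rho^2 x) y_1^2$. Telescoping,
\[
y_1^3 y_2^3 \;=\; y_1^2\, p(x)\, y_2^2 \;=\; p(\rho^2 x)\, y_1^2 y_2^2 \;=\; p(\rho^2 x)\, p(\rho x)\, p(x)
\]
inside $C$. The left-hand side is zero because $y_1^3 = 0$. On the other hand, $x \mapsto \rho x$ is an $F$-algebra automorphism of $K$ (it preserves $x^3 = \alpha$), so $p(\rho x)$ and $p(\rho^2 x)$ are also nonzero in $K$; since $K$ is a field, the product $p(\rho^2 x)\, p(\rho x)\, p(x)$ is nonzero in $K$, and the injective embedding $K \hookrightarrow C$ (any nonzero homomorphism out of a field is injective) forces this product to remain nonzero in $C$ --- the desired contradiction.

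The ``in particular'' assertion then follows from the standard fact that the center of any nonzero simple ring is a field: since $y_1^3, y_2^3$ are central in $C_\Phi$ by Lemma~\ref{central}, their images are central in any simple quotient, hence invertible as soon as they are nonzero. The main obstacle I expect is finding the right algebraic maneuver --- namely, pinning $w$ down as a scalar and realizing $y_1 y_2$ as a nonzero polynomial in $x$ --- that transfers the vanishing of $y_1^3$ into a statement in the commutative field $K$, where the contradiction becomes immediate.
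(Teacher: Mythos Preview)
Your proof is correct and follows essentially the same approach as the paper's: both reduce to the case $e\neq 0$, pin down $w=\rho D/e$, and use the standing hypothesis that $F[x]$ is a field to produce a nonzero element of $F[x]$ that should vanish. The only cosmetic difference is that the paper stops at $y_2 y_1\in F[x]^\times$ and observes that $y_1$ then has a left inverse, contradicting $y_1^3=0$, whereas you carry out the telescoping $y_1^3 y_2^3 = p(\rho^2 x)p(\rho x)p(x)$ explicitly; both lead to the same contradiction.
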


\begin{proof}
Assume to the contrary that $y_1^3=y_2^3=0$. Then by Equation \eqref{y1^3y2^3}, $D=\rho^2 ew$. If $e=0$, then $D=0$, a contradiction. If $e\neq 0$, then by the choice of $w$, we have that $y_2 y_1=\rho e^{-1}Dx -(3 \alpha)^{-1}\rho^{2} D_1 x^2 - (9 \alpha)^{-1}D_2$, which is invertible as a nonzero element of the field $F[x]$.
However this means that $y_1$ is invertible too, which is a contradiction.
\end{proof}

\begin{cor}
The algebra $C_\Phi$ is Azumaya of rank 9.
\end{cor}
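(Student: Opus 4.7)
The plan is to deduce that $C_\Phi$ is Azumaya of rank $9$ over its center $Z = F[E]$ (identified by \Cref{Z}) by gluing the two local Azumaya descriptions provided by \Cref{L1}. Concretely, since the Azumaya property of a fixed rank is local on $\operatorname{Spec}(Z)$, it suffices to exhibit an open cover of $\operatorname{Spec}(Z)$ on each piece of which $C_\Phi$ is already known to be Azumaya of rank $9$; the principal opens $D(y_1^3)$ and $D(y_2^3)$ are the natural candidates. Reducing the theorem to this covering problem is the first move.

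The heart of the argument is to check that $(y_1^3, y_2^3) = Z$ as ideals of $Z$. I would argue by contradiction: assume some maximal ideal $\mathfrak{m} \subset Z$ contains both $y_1^3$ and $y_2^3$. By \Lref{27fg} the algebra $C_\Phi$ is finitely generated as a $Z$-module, and because $Z$ embeds into $C_\Phi$ as a subring, the identity $1 \in C_\Phi$ has no nonzero $Z$-annihilator. Hence $(C_\Phi)_\mathfrak{m}$ is a nonzero finitely generated module over the local ring $Z_\mathfrak{m}$, and Nakayama's lemma forces $C_\Phi / \mathfrak{m} C_\Phi \neq 0$. This quotient is a nonzero finite-dimensional algebra over the field $Z/\mathfrak{m}$, so it admits a simple homomorphic image. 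In that simple image both $y_1^3$ and $y_2^3$ vanish, contradicting \Pref{eitheror}.

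With the covering established, \Cref{L1} identifies $C_\Phi[y_1^{-3}]$ and $C_\Phi[y_2^{-3}]$ with symbol Azumaya algebras of rank $9$ over $Z[y_1^{-3}]$ and $Z[y_2^{-3}]$ respectively, and localness of the Azumaya property then yields the conclusion for $C_\Phi$ over $Z$. The hard part is precisely the ideal-theoretic covering step: \Pref{eitheror} is phrased in terms of simple quotients, and the task is to translate that into the non-containment of $(y_1^3, y_2^3)$ in any maximal ideal of $Z$. The bridge is provided by the finite generation from \Lref{27fg} together with Nakayama's lemma, which together guarantee that each maximal ideal of the center gives rise to a nonzero reduction of $C_\Phi$, and hence to a simple quotient to which \Pref{eitheror} can be applied.
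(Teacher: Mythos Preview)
Your proof is correct and essentially matches the paper's argument: the paper also combines \Lref{27fg}, \Cref{Z}, \Pref{eitheror}, and \Cref{L1}, checking fiber by fiber that $C_\Phi/\mathfrak m C_\Phi$ is central simple of degree $3$ for every maximal ideal $\mathfrak m$ of $Z$. Your covering formulation via $D(y_1^3)\cup D(y_2^3)=\operatorname{Spec}(Z)$ is just the equivalent ``open cover'' version of the same local-to-global step, and your explicit Nakayama argument for $C_\Phi/\mathfrak m C_\Phi\neq 0$ makes rigorous a point the paper leaves implicit.
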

\begin{proof}
By Corollary \ref{Z} and Lemma \ref{27fg}, $C_\Phi$ is finitely generated as a module over its center $Z=F[y_1^3, w]$. For every maximal ideal $m$ of $Z$, it follows from Proposition \ref{eitheror} and Corollary \ref{L1} that $C_\Phi/mC_\Phi$ is a central simple algebra of degree 3 over the field $Z/m$. Therefore, $C_\Phi$ is Azumaya of rank 9.
\end{proof}

\begin{rem}
Another way to prove that $C_\Phi$ is Azumaya is the following: Every $\Phi(Z,X,Y)=Z^3-\sum_{k=1}^3 f_k(X, Y) Z^{3-k}$ can be linearly transformed over $\bar{F}$ into the one with $f_1=0$,  $f_2=eXY$ and $f_3=X^3 +Y^3$ for some $e\in \bar{F}$ (in characteristic not 2 or 3), and therefore that $C_{\Phi}$ is Azumaya follows immediately from \cite{Kuo} and the fact that the construction of $C_{\Phi}$ is functorial in $F$.
\end{rem}

We finally are able to describe explicitly the simple homomorphic images of $C_\Phi$.

\begin{thm}\label{11corresp}
There is a one-to-one correspondence between the simple homomorphic images of $C_\Phi$ and the Galois orbits of $\bar{F}$-rational points on the affine elliptic curve $E$ as follows: the Galois orbit containing $(R_0,S_0)$ on $E$ gives rise to the $F(R_0,S_0)$-central simple algebra $(\alpha, S_0)_{3,F(R_0,S_0)}$ if $S_0\neq 0$ and $(\rho^2 e R_0-D, \alpha)_{3,F(R_0,S_0)}$ if $S_0=0$.
\end{thm}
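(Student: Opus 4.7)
The plan is to exploit the Azumaya structure established in the preceding corollary. For an Azumaya algebra $A$ of constant rank $n^2$ over a commutative ring $Z$, it is a standard fact that the two-sided ideals of $A$ are in bijection with the ideals of $Z$ via $J\mapsto JA$ and $I\mapsto I\cap Z$; in particular, the simple homomorphic images of $A$ correspond bijectively to the maximal ideals $m$ of $Z$, each giving a central simple algebra $A/mA$ of degree $n$ over the residue field $Z/m$. Applied to $A=C_\Phi$ with $Z=F[y_1^3,w]\cong F[E]$ by \Cref{Z}, this immediately reduces the problem to identifying the maximal ideals of $F[E]$ and computing the corresponding quotients of $C_\Phi$. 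The maximal ideals of the coordinate ring $F[E]$ are in one-to-one correspondence with the Galois orbits of $\bar{F}$-rational points $(R_0,S_0)$ on the affine elliptic curve $E$, with residue field $F(R_0,S_0)$; this matches exactly the indexing set in the statement.

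It remains to compute the simple quotient $C_\Phi/mC_\Phi$ at each point. If $S_0\neq 0$, then $y_1^3\not\in m$, so the quotient map factors through the localization $C_\Phi[y_1^{-3}]$, which by \Cref{L1} equals the symbol Azumaya algebra $(\alpha,y_1^3)_{3,F[y_1^{\pm 3},w]}$. Reduction modulo $m$ sends $y_1^3\mapsto S_0$ and $w\mapsto R_0$, producing the cyclic algebra $(\alpha,S_0)_{3,F(R_0,S_0)}$, as claimed.

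When $S_0=0$, \Pref{eitheror} guarantees that $y_2^3\not\in m$, so one passes instead through the twin localization $C_\Phi[y_2^{-3}]=(y_2^3,\alpha)_{3,F[y_2^{\pm 3},w]}$ from \Cref{L1}. From \Eq{y1^3y2^3} we have $y_2^3=\rho^2 ew-D-y_1^3$, so modulo $m$ the element $y_2^3$ specializes to $\rho^2 eR_0-D$, and the quotient becomes the symbol algebra $(\rho^2 eR_0-D,\alpha)_{3,F(R_0,S_0)}$, again as claimed.

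The main obstacle I anticipate is largely bookkeeping: one must invoke cleanly the Azumaya dictionary between maximal ideals of the center and simple homomorphic images, identify the maximal spectrum of $F[E]$ with Galois orbits of geometric points (using smoothness of $E$, which holds by the nonvanishing-discriminant hypothesis), and check that the two localization-based computations agree on the overlap where both $S_0\neq 0$ and $\rho^2 eR_0-D\neq 0$, so that the description is unambiguous and the assignment is genuinely bijective rather than merely surjective.
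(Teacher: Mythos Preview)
Your proposal is correct and follows essentially the same approach as the paper's proof: use the Azumaya property to reduce to maximal ideals of the center $Z\cong F[E]$, identify those with Galois orbits of points, and compute each quotient via the two localizations in \Cref{L1} together with \Pref{eitheror} and \Eq{y1^3y2^3}. Your worry about agreement on the overlap is unnecessary, since in either case you are computing the same well-defined quotient $C_\Phi/mC_\Phi$, so the two symbol presentations automatically describe isomorphic algebras.
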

\begin{proof}
Since $C_\Phi$ is Azumaya, there is an one-to one correspondence between its simple homomorphic images and maximal ideals of its center $Z\cong F[E]$. Furthermore, $y_1^3, w$ in the center correspond to $S, R$. Thus by Equation \eqref{y1^3y2^3}, $y_2^3$ corresponds to $\rho^2 e R-S-D$. Therefore, the result follows from Proposition \ref{eitheror} and Corollary \ref{L1}.
\end{proof}

Define the function $\Psi$ from the group $E(F)$ of $F$-rational points on the elliptic curve $E$ into the Brauer group of $F$ as follows
\begin{align*}
\Psi\colon E(F)
 & \rightarrow Br(F)\\
(R_0,S_0) & \mapsto \begin{cases}
[(\alpha, S_0)_{3,F}] & \textrm{if}\ S_{0}\neq0\\
[(\rho^2 e R_0-D, \alpha)_{3,F}] & \textrm{if}\ S_{0}=0\end{cases}\\
O & \mapsto 1.
\end{align*}
We next show that the arguments used in \cite[Section 4]{Kuo} can be applied here to show that $\Psi$ is a group homomorphism.

\begin{prop}
The function $\Psi$ is a group homomorphism.
\end{prop}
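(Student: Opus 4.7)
The plan is to follow the strategy of \cite[Section 4]{Kuo}. The group law on $E$ says that three $F$-rational points $P_1,P_2,P_3$ satisfy $P_1+P_2+P_3 = O$ if and only if they are collinear (with the standard conventions for tangencies and the point at infinity). It therefore suffices to prove two reciprocity statements: (i) $\Psi(-P) = \Psi(P)^{-1}$ for every $P \in E(F)$, and (ii) $\Psi(P_1)\Psi(P_2)\Psi(P_3) = 1$ in $\Br(F)$ for every collinear triple of $F$-rational points on $E$. Granted these, the homomorphism property follows: setting $P_3 = -(P_1+P_2)$, (ii) gives $\Psi(P_1)\Psi(P_2) = \Psi(P_3)^{-1}$, which by (i) equals $\Psi(-P_3) = \Psi(P_1+P_2)$.

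For (ii) in the generic case where all $S_i \neq 0$ and the line through the three points is of the form $S = MR + \Gamma$ with $M,\Gamma \in F$, I substitute this line into the equation of $E$ to obtain a cubic in $R$ whose roots are $R_1,R_2,R_3$; Vieta's relations then let me compute $S_1 S_2 S_3 = \prod_i(MR_i + \Gamma)$ explicitly. After the cancellations between the $R^2$- and constant-coefficients of the cubic, this product takes the form
\begin{equation*}
S_1 S_2 S_3 \;=\; \Gamma^3 + \frac{D_1^3 M^3}{27\alpha^2} + \frac{D_2^3 M^3}{729\alpha^4} - \frac{\rho^2 D_1 D_2 \Gamma M^2}{9\alpha^2}.
\end{equation*}
The critical observation is that the right-hand side equals the cubic norm $N_{K/F}(\xi)$ for $K := F[u]/(u^3-\alpha)$ and
\begin{equation*}
\xi \;=\; \Gamma + \frac{\rho D_1 M}{3\alpha}\,u + \frac{\rho D_2 M}{9\alpha^2}\,u^2,
\end{equation*}
as one verifies by expanding $N_{K/F}(a+bu+cu^2) = a^3 + b^3\alpha + c^3\alpha^2 - 3abc\alpha$. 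Since $K$ splits every symbol of the form $(\alpha,-)_{3,F}$, the norm condition forces $(\alpha, S_1 S_2 S_3)_{3,F}$ to be trivial in $\Br(F)$; bilinearity of the symbol in its second slot then gives $\Psi(P_1)\Psi(P_2)\Psi(P_3) = (\alpha, S_1 S_2 S_3)_{3,F} = 1$.

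For (i), the identifications $y_1^3 \leftrightarrow S$, $w \leftrightarrow R$, and $y_2^3 \leftrightarrow \rho^2 eR - S - D$ used in the proof of \Tref{11corresp} yield $-P = (R_0,\rho^2 e R_0 - S_0 - D)$ for $P = (R_0,S_0)$. The defining equation of $E$ directly gives
\begin{equation*}
S_0 \cdot (\rho^2 e R_0 - S_0 - D) \;=\; \alpha R_0^3 - \frac{D_1^3}{27\alpha} - \frac{D_2^3}{729\alpha^3} - \frac{\rho^2 D_1 D_2 R_0}{9\alpha},
\end{equation*}
and the right-hand side is again a norm from $K$ to $F$ by a similar direct calculation, proving (i) whenever both $S_0$ and the $S$-coordinate of $-P$ are nonzero. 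The degenerate configurations in (ii) --- a vertical line (which is literally (i)), some $S_i = 0$, or coincident points (tangent and inflectional tangents, with $M$ and $\Gamma$ still in $F$ as long as the multiple point is) --- will be reduced to the generic case by combining (i) with the alternative formula $\Psi(P_i) = [(\rho^2 e R_i - D, \alpha)_{3,F}]$, using $(\beta,\alpha)_3 = (\alpha,\beta)_3^{\mathrm{op}}$, and, where needed, dualizing a collinear triple to $(-P_1,-P_2,-P_3)$ whose $S$-coordinates become generic.

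I expect the main obstacle to lie in the book-keeping for the degenerate configurations, particularly when some $S_i = 0$ coincides with $\rho^2 e R_i = D$ (so that both $\Psi$-formulas simultaneously apply and $P_i$ is a $2$-torsion point). The norm identities themselves, which drive both (i) and (ii), are straightforward but lengthy verifications by comparison of coefficients.
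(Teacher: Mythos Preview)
Your approach is correct in substance but genuinely different from the paper's. The paper does \emph{not} verify collinearity relations by hand; instead it shows that $C_\Phi$ defines a Brauer class on $E$ which is unramified everywhere (including at $O$), computes the specialization of $(\alpha,S)_{3,F(E)}=(\alpha,R^3/S^2)_{3,F(E)}$ at $O$ to be $(\alpha,-1/\alpha)_{3,F}=1$, and then invokes the general result of Ciperiani--Krashen \cite[Lemma~3.2 and Theorem~3.5]{CK} that the specialization map $E(F)\to\Br(F)$ attached to an everywhere-unramified class trivial over $O$ is a group homomorphism. So the paper's proof is a one-paragraph reduction to a black box, with a single residue computation.

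Your route is the older Haile-style argument: encode the addition law as collinearity, compute $S_1S_2S_3$ by Vieta, and recognize it as a norm from $F[u]/(u^3-\alpha)$. Your norm identity is correct (I checked the coefficients), and the same mechanism handles (i) via the product $S_0(\rho^2 eR_0-S_0-D)$ of the two roots of the quadratic in $S$. What your approach buys is a completely elementary, self-contained proof that does not depend on \cite{CK} or on knowing that $C_\Phi$ extends to an unramified class on all of $E$. What it costs is exactly what you flagged: the degenerate configurations (vertical lines, some $S_i=0$, tangent and inflectional lines, $2$-torsion) require separate bookkeeping, whereas the Ciperiani--Krashen machinery absorbs all of this uniformly. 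Both proofs are valid; the paper's is shorter given the cited machinery, yours is more transparent.
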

\begin{proof}
Identify $Z=F[y_1^3, w]$ with $F[E]$. Similar to the proof of \cite[Corollary 4.3]{Kuo}, the Brauer class of $C_\Phi$ in $Br(F(E))$ is unramified everywhere. Thus, the algebra $C_\Phi$ can be extended to a Brauer class in $Br(E)$. Also, $C_\Phi\otimes_{F[E]} F(E)=(\alpha, S)_{3,F(E)}=(\alpha, T)_{3,F(E)}$, where $T = R^3/S^2$. By Equation \eqref{E} we see that
\begin{equation}\label{T}
T = \frac{D-\rho^2 e R}{-\alpha S}-\frac{1}{\alpha}+\frac{D_1^3}{27\alpha^2 S^2}+\frac{D_2^3}{729 \alpha^4 S^2}+\frac{\rho^2 D_1D_2R}{9\alpha^2 S^2}.
\end{equation}
Let $\nu$ be the discrete valuation on $F(E)$ corresponding to $O$. Then $\nu(R)=-1$ and $\nu(S)=-3/2$. Thus  $\nu(T)=0=\nu(\alpha)$, and hence the specialization of $C_\Phi\otimes_{F[E_a]} F(E)$ at $O$ is $(\alpha, \bar{T})_{3,F}$ where $\bar{T}$ is the image of $T$ in the residue field of $O$. By Equation \eqref{T}, $\bar{T}=-1/\alpha= N_{F(\sqrt[3]{\alpha})/F}((-1/\alpha)\sqrt[3]{\alpha^2})$. Thus, the specialization at $O$ of the class of $C_\Phi$ in $Br(E)$ is trivial. Therefore, similar to the proof of \cite[Theorem 4.1]{Kuo}, the result now follows from Lemma 3.2 and Theorem 3.5 of \cite{CK}.
\end{proof}

Since $C_\Phi$ is Azumaya of rank 9, one can check that the homogeneous polynomial $\Phi(X,Y,Z)$ over $F$ is then absolutely irreducible. Let $C$ denote the cubic curve given by the equation $\Phi(X,Y,Z)=0$. The computations in \cite{A} show that the elliptic curve $E$ is the Jacobian of the cubic curve $C$. We have the following two analogues of Proposition 4.5 and Theorem 4.6 of \cite{Kuo} with similar proofs, which we therefore skip.

\begin{prop}\label{F(C)}
The group homomorphism $\Psi\colon E(F)\rightarrow Br(F)$ maps onto the relative Brauer group $Br(F(C)/F)$.
\end{prop}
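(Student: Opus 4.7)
The plan is to mirror the proof of \cite[Proposition 4.5]{Kuo}, which establishes the analogous result when $r=t=\beta=\gamma=0$. Two inclusions are required: $\operatorname{Im}(\Psi) \subseteq Br(F(C)/F)$ and the reverse.

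For the first inclusion, I would show that every $\Psi(P)$ becomes split after base change to $F(C)$. By \Pref{isom} and \Cref{L1}, we have $C_\Phi \otimes_{F[E]} F(E) \cong (\alpha, y_1^3)_{3,F(E)}$. Since $E$ is the Jacobian of $C$, the Abel--Jacobi map (defined using the canonical degree-$3$ divisor class cut out on $C$ by a line in $\mathbb{P}^{2}$) gives an embedding $F(E) \hookrightarrow F(C)$; under this embedding, the generic symbol algebra $(\alpha, y_1^3)_{3,F(E)}$ becomes split over $F(C)$, because the very definition of $C$ provides, over $F(C)$, a root $Z$ of $\Phi(Z, X, Y) = 0$, which translates into the required norm-form condition. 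Specializing at the maximal ideal of $F[E]$ corresponding to an $F$-rational point $P$ then shows that $\Psi(P) \otimes_F F(C)$ is split. The case $S_0 = 0$ is handled symmetrically using the alternative presentation of $C_\Phi[y_2^{-3}]$ from \Cref{L1}.

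For the reverse inclusion, I would invoke the Faddeev--Lichtenbaum exact sequence
\begin{equation*}
0 \to Br(F) \to Br(F(C)) \to \bigoplus_{P \in C_{(1)}} H^{1}(F(P), \mathbb{Q}/\mathbb{Z}) \to H^{1}(F, \mathbb{Q}/\mathbb{Z}) \to 0
\end{equation*}
together with the Jacobian identification $\operatorname{Pic}^{0}(C) \cong E(F)$. An arbitrary class $\omega \in Br(F(C)/F)$ has trivial residue at every closed point of $C$, so it extends to an Azumaya algebra on $C$. Combined with the triviality, established in the proof of the previous proposition, of the specialization of the class of $C_\Phi$ at the point at infinity of $E$, this identifies $\omega$ with $\Psi(P)$ for a uniquely determined $P \in E(F)$, yielding the desired surjectivity.

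The main obstacle is making the first inclusion rigorous: the splitting of $(\alpha, S_0)_{3,F}$ over $F(C)$ is not transparent, since $\alpha$ need not become a cube in $F(C)$. One must instead realise $S_0$ as a norm from $F(C)(\sqrt[3]{\alpha})/F(C)$, which requires the explicit geometric dictionary between $F$-rational points of $E$ and degree-$3$ divisor classes on $C$ whose residue fields contain a cube root of $\alpha$. Once this step is in hand, the second inclusion reduces to standard exact-sequence bookkeeping parallel to \cite[Theorem 4.6]{Kuo}.
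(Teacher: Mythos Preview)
The paper omits the proof entirely, deferring to \cite[Proposition~4.5]{Kuo}; given that the preceding proposition is proved by invoking \cite[Lemma~3.2 and Theorem~3.5]{CK}, the intended argument almost certainly continues in the Ciperiani--Krashen framework: once the class of $C_\Phi$ has been identified with an unramified class in $Br(E)$ whose specialization at $O$ is trivial, the CK machinery already packages both inclusions and delivers $\operatorname{Im}(\Psi)=Br(F(C)/F)$ directly. Your plan to verify the two inclusions by hand is therefore a genuinely different route.

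Your first inclusion is reasonable in outline, and the obstacle you flag is the real one. A cleaner way around it, avoiding explicit norm computations, is to base-change the whole Azumaya algebra: $C_\Phi\otimes_F F(C)$ is the generalized Clifford algebra of $\Phi$ over $F(C)$, and over $F(C)$ the curve $C$ acquires the tautological rational point, so by the companion result (the analogue of \cite[Theorem~4.6]{Kuo}) this Azumaya algebra is split over $F(C)[E]$; specializing at $m_P$ then gives the splitting of $\Psi(P)\otimes_F F(C)$ for every $P\in E(F)$.

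Your reverse inclusion, however, has a genuine gap. An element $\omega\in Br(F(C)/F)$ lives in $Br(F)$, not in $Br(F(C))$; saying that it ``has trivial residue at every closed point of $C$'' and ``extends to an Azumaya algebra on $C$'' is vacuously true (the constant class does this) and produces no information. The sentence that follows---``this identifies $\omega$ with $\Psi(P)$ for a uniquely determined $P\in E(F)$''---is asserted without any mechanism: nothing in your argument manufactures a point of $E$ from $\omega$. What is actually needed here is Lichtenbaum's exact sequence relating $Br(F(C)/F)$ to the cokernel of $\operatorname{Pic}(C)\to(\operatorname{Pic}\bar C)^{G}$, together with the identification of that cokernel with a quotient of $E(F)$ via the period--index obstruction; this is precisely the content supplied by \cite{CK}, and reproducing it from scratch is substantially more work than your sketch suggests.
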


\begin{prop}
The Azumaya algebra $C_\Phi$ is split if and only if the cubic curve $C$ has an $F$-rational point.
\end{prop}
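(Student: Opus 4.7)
The plan is to adapt the strategy of \cite[Theorem 4.6]{Kuo}, which is itself modeled on Haile's treatment of the classical cubic case. The reduction proceeds in two stages: first to the generic fibre of $C_\Phi$ over its center, and then to the geometry of the cubic curve $C$.

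Since $E$ is a smooth affine curve, its coordinate ring $F[E]$ is a Dedekind domain, and the restriction map $\Br(F[E]) \to \Br(F(E))$ is injective on classes of Azumaya algebras. Combined with \Pref{isom}, this says that $C_\Phi$ is a split Azumaya algebra over $F[E]$ if and only if the generic fibre $C_\Phi \otimes_{F[E]} F(E) \cong (\alpha, S)_{3, F(E)}$ is trivial in $\Br(F(E))$, equivalently if and only if $S$ is a norm from the Kummer extension $F(E)[\sqrt[3]\alpha]/F(E)$.

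For the direction ($\Leftarrow$), the key observation is that over $F[\sqrt[3]\alpha]$ the curve $C$ always acquires the obvious rational point $(1, 0, \sqrt[3]\alpha)$ (plugging $X=1,\ Y=0$ into $\Phi$ gives $Z^3 - \alpha = 0$), so $C$ becomes isomorphic to its Jacobian $E$ over $F[\sqrt[3]\alpha]$, yielding $F(C)[\sqrt[3]\alpha] = F(E)[\sqrt[3]\alpha]$ unconditionally. If in addition $(a, b, c) \in C(F)$, then $C \cong E$ already over $F$ and hence $F(C) \cong F(E)$ as extensions of $F$; pulling back $S$ to $F(C)$ and exploiting the identity $Z^3 - \alpha X^3 = Y \cdot (\textrm{polynomial in } X, Y, Z)$ that holds on $C$ (which already expresses $Z^3 - \alpha X^3$ as a norm from $F(C)[\sqrt[3]\alpha]/F(C)$, up to a rational multiple), one exhibits $S$ itself as a norm and so splits the symbol algebra. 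For the direction ($\Rightarrow$), a norm representation $S = N_{F(E)[\sqrt[3]\alpha]/F(E)}(\xi)$ translates, via the relation $T = R^3/S^2$ and the defining equation \eqref{E} of $E$, into the data of a rational map from an open subset of $E$ to $C$ defined over $F$; applying this map to the origin $O \in E(F)$ (or another $F$-rational point of $E$) produces an $F$-rational point of $C$.

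The main obstacle is the explicit back-and-forth between the algebraic norm condition on $S$ and the geometric existence of an $F$-rational point on $C$; this is precisely the technical content of \cite[Theorem 4.6]{Kuo}. The generalization to the present setting is routine because the extra parameters $r, t, \beta, \gamma$ enter only through the constants $D, D_1, D_2$ appearing in \eqref{E} and do not alter the overall cubic form of the defining equations of $C$ and $E$, so Kuo's argument carries over verbatim once those constants are substituted.
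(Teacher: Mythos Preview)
The paper itself does not give a proof of this proposition: it states that the argument is similar to \cite[Theorem~4.6]{Kuo} and omits it entirely. Your proposal likewise defers to Kuo's theorem after reducing to the generic fibre $(\alpha,S)_{3,F(E)}$ and noting that the extra parameters $r,t,\beta,\gamma$ are absorbed into the constants $D,D_1,D_2$, so your approach coincides with the paper's.
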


\section{Characteristic three}\label{Secchar3}

In this section we study the generalized Clifford algebra of the monic polynomial $\Phi(Z,X,Y)=Z^3-e XYZ-(\alpha X^3+\beta X^2 Y+\gamma X Y^2+\delta Y^3)$ for some $e,\alpha,\beta,\gamma,\delta \in F$, assuming that the field $F$ is of characteristic 3. The algebra $C_\Phi$ is by definition $$F\langle x,y\colon   x^3=\alpha, y^3=\delta, x^2 * y=e x+\beta, x * y^2=e y+\gamma\rangle.$$
We assume that $\alpha \neq 0$ and treat the two cases of $e=0$ and $e \neq 0$ separately.

\subsection{$e=0$}

In this case, $C_\Phi$ is simply the ordinary Clifford algebra of the form $f(X,Y)=\alpha X^3+\beta X^2 Y+\gamma X Y^2+\delta Y^3.$
The element $x$ is $3$-central. Therefore, according to Lemma \ref{decomcharp2}, we can decompose $y$ as
$$y=y_2-y_1$$
such that
\begin{equation}\label{y012}
x y_2-y_2 x=y_1, x y_1-y_1 x=y_0, \textrm{ where }y_0 x=x y_0.
\end{equation}
Substituting this in the relation $x^2 * y=\beta$, by a straight-forward calculation we get $y_0=\beta.$
Thus from the relation $x * y^2=\gamma$, we then get
\begin{equation}\label{y12}
y_1 y_2-y_2 y_1=\gamma.
\end{equation}
Substituting this further in $y^3=\delta$ leaves
\begin{equation}\label{y21^3}
y_2^3-y_1^3=\delta.
\end{equation}
Therefore, $C_\Phi$ is an $F$-algebra generated by $x, y_1, y_2$ subject to the relations $x^3=\alpha$, Equation \eqref{y012},  where $y_0=\beta$, and Equations \eqref{y12}, \eqref{y21^3}. We shall see in particular that, unless $f$ is diagonal, $C_\Phi$ is Azumaya.

Let $w=\beta y_2+\gamma x+y_1^2$. It is a straight-forward calculation to see that $w$, $y_1^3$ and $y_2^3$ commute with $x$, $y_1$ and $y_2$, and therefore they are central in $C_\Phi$. Consider the following affine curve
$$E_\Delta : s^2=r^3+\Delta,$$
where $\Delta=-\gamma^3 \alpha+\gamma^2 \beta^2-\beta^3\delta+ \beta^6.$
We next show that in the case of $\beta \neq 0$, $C_\Phi$ is Azumaya of rank 9 and its center is isomorphic to the coordinate ring of $E_\Delta$.

\begin{lem}
If $\beta \neq 0$ then the subalgebra $F[w,y_1^3]$ of the center of $C_\Phi$ is isomorphic to the coordinate ring $F[r,s]$ of the affine curve $E_\Delta$. In particular it is an integral domain.
\end{lem}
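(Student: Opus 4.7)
To derive a polynomial identity between the central elements $w$ and $y_1^3$ matching the defining equation of $E_\Delta$, I rearrange the definition of $w$ (using $\beta\neq 0$) as $\beta y_2 = w - \gamma x - y_1^2$ and cube both sides. The left becomes $\beta^3 y_2^3 = \beta^3(y_1^3 + \delta)$ by Eq.~\eqref{y21^3}. For the right, I expand using $(a+b)^3 = a^3 + b^3 + (a^2 \ast b) + (a \ast b^2)$ together with char-$3$ arithmetic. A first application with $a = w$ (which is central) collapses the mixed terms and gives $(w - \gamma x - y_1^2)^3 = w^3 + (-\gamma x - y_1^2)^3$. A second application with $a = \gamma x$, $b = y_1^2$ relies on the commutation relation $xy_1 - y_1 x = y_0 = \beta$ from Eq.~\eqref{y012}, from which a short calculation gives $y_1^2 x = xy_1^2 - 2\beta y_1$. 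In characteristic $3$ the mixed-term contributions telescope to the single scalar $-\gamma^2\beta^2$, yielding $(\gamma x + y_1^2)^3 = \gamma^3 \alpha + y_1^6 - \gamma^2 \beta^2$.

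Equating the two sides of the cube and using the char-$3$ identity $(y_1^3 - \beta^3)^2 = y_1^6 + \beta^3 y_1^3 + \beta^6$ (since $-2 \equiv 1$), I obtain the desired relation $(y_1^3 - \beta^3)^2 = w^3 + \Delta$. This produces a surjective $F$-algebra homomorphism
\[
\psi\colon F[r,s]/(s^2 - r^3 - \Delta) \twoheadrightarrow F[w, y_1^3], \qquad r \mapsto w,\ s \mapsto y_1^3 - \beta^3.
\]
The polynomial $s^2 - r^3 - \Delta$ is irreducible in $F[r,s]$: viewed as a quadratic in $s$ over $F[r]$, it would split only if $r^3 + \Delta$ were a square in $F[r]$, which is impossible for degree reasons. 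Hence the source of $\psi$ is a one-dimensional integral domain, and any nontrivial quotient of it is finite-dimensional over $F$. The isomorphism claim therefore reduces to showing $\psi$ is injective, equivalently that $y_1^3$ is transcendental over $F$ inside $C_\Phi$.

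Establishing this transcendence is the main obstacle. I plan to handle it by exhibiting a family of $3$-dimensional matrix representations of $C_\Phi$, parameterized by $F$-points of $E_\Delta$ and built in the spirit of the embedding of Proposition~\ref{isom} (applied here to $E_\Delta$), under which $y_1^3$ assumes infinitely many distinct values. This forces $y_1^3$ to be transcendental in $C_\Phi$, hence $\ker\psi = 0$, giving the isomorphism $F[w, y_1^3] \cong F[E_\Delta]$; the ``in particular'' clause is then immediate.
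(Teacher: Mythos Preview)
Your derivation of the relation $(y_1^3-\beta^3)^2=w^3+\Delta$ is correct and matches the paper's computation (the paper simply cubes $y_2=\beta^{-1}(w-\gamma x-y_1^2)$ and substitutes into $y_2^3-y_1^3=\delta$, arriving at the same identity in one line). Your irreducibility argument for $s^2-r^3-\Delta$ is fine too.

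Where you diverge from the paper is on the injectivity of $\psi$. The paper does not go through representations; it relies instead on the \emph{presentation} of $C_\Phi$ recorded just before the lemma. Since $\beta\neq 0$, one may trade the generator $y_2$ for $w$, and the defining relations of $C_\Phi$ become exactly: $x^3=\alpha$, $[x,y_1]=\beta$, $[x,w]=[y_1,w]=0$, together with the single relation $w^3+\Delta=(y_1^3-\beta^3)^2$. The algebra on the first three relations is free of rank~$9$ over the polynomial ring $F[w,y_1^3]$ (PBW basis $x^iy_1^j$, $0\le i,j\le 2$), so imposing the last relation only cuts down the base ring. Hence $F[w,y_1^3]$ inside $C_\Phi$ is precisely $F[r,s]/(s^2-r^3-\Delta)$. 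This is what the paper's terse ``Consequently'' is encoding, and it gives the isomorphism without any auxiliary construction.

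Your representation-theoretic route would also succeed, but as written it is only a plan: you still have to actually construct the family of representations and check that $y_1^3$ takes infinitely many values. Doing so essentially amounts to proving Theorem~\ref{betaneq}(3) in advance of the lemma, which reverses the paper's logical order. Also, the appeal to Proposition~\ref{isom} is misplaced---that proposition sits in the characteristic $\neq 3$ section and uses the symbol algebra $(\alpha,S)_{3,F(E)}$; the correct analogue here is the cyclic algebra $[\alpha\beta^{-3}(s+\beta^3),\alpha)_{3}$ over $F(E_\Delta)$, which is exactly what the paper builds \emph{after} this lemma. So your approach works in principle but is heavier, incomplete as stated, and leans on material that logically comes later.
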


\begin{proof}
If $\beta \neq 0$, then $y_2=\beta^{-1}(w-\gamma x-y_1^2)$ and substituting it in Equation \eqref{y21^3} yields $\beta^{-3} (w^3-\gamma^3 \alpha-y_1^6+\gamma^2 \beta^2)-y_1^3=\delta,$ or equivalently,
\begin{equation*}
w^3+\Delta=(y_1^3 -\beta^3)^2.
\end{equation*}
Consequently $F[w,y_1^3]$ is the $F$-subalgebra generated by $w$ and $y_1^3$ subject only to the relation in the equation above. Thus the map defined by  sending $r, s$ to $w, y_1^3-\beta^3$ clearly gives an $F$-algebra isomorphism.
\end{proof}

Note that $E_\Delta$ is smooth (and then an affine elliptic curve) if and only if its discriminant is nonzero or $\Delta\neq 0$.
In this case, its coordinate ring is a Dedekind domain. In the following, for any integral domain $R$, $q(R)$ stands for its quotient field.

\begin{thm}\label{betaneq}
If $\beta \neq 0$ then
\begin{enumerate}
\item[(1)] $C_\Phi$ is Azumaya of rank 9.
\item[(2)] The center of $C_\Phi$ is the subalgebra $F[w,y_1^3]$, and it is isomorphic to the coordinate ring of $E_\Delta$.
\item[(3)] There is a one-to-one correspondence between the Galois orbits of $\bar{F}$-rational points on $E_\Delta$ and the simple homomorphic images of $C_\Phi$, taking each Galois orbit containing $(r_0,s_0)$ to the degree 3 cyclic algebra $[\alpha\beta^{-3} (s_0+\beta^3),\alpha)_{3,F[r_0,s_0]}$.
\end{enumerate}
\end{thm}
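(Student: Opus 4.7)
The plan is to exhibit $C_\Phi$ as a classical degree-$3$ cyclic algebra of Artin-Schreier type over the subring $Z_0=F[w,y_1^3]$, and then transfer the Azumaya structure across the resulting isomorphism. The preceding lemma already identifies $Z_0$ with the coordinate ring $F[E_\Delta]$ of the affine elliptic curve, so once $C_\Phi$ is shown to be Azumaya of rank $9$ with center exactly $Z_0$, parts (2) and (3) of the theorem follow from standard Azumaya theory.

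First, because $\beta\neq 0$, the definition of $w$ inverts to $y_2=\beta^{-1}(w-\gamma x-y_1^2)$. Together with $x^3=\alpha$, the centrality of $y_1^3$, and the relation $xy_1-y_1x=\beta$ from \Eq{y012}, this shows that $C_\Phi$ is generated as a $Z_0$-module by the nine monomials $\{x^iy_1^j : 0\leq i,j\leq 2\}$. Next I introduce the element $X=\beta^{-1}y_1x\in C_\Phi$. A one-line computation gives $xXx^{-1}=\beta^{-1}(y_1x+\beta)=X+1$, and a slightly longer commutator computation---in which an intermediate term $3\beta^{-2}y_1^2x^2$ is killed by the hypothesis $\charac{F}=3$---yields
\begin{equation*}
X^3-X=\alpha\beta^{-3}y_1^3\in Z_0.
\end{equation*}
Hence $(X,x)$ satisfies the defining relations of the cyclic algebra $A:=[\alpha\beta^{-3}y_1^3,\alpha)_{3,Z_0}$, inducing a $Z_0$-algebra homomorphism $\psi\colon A\to C_\Phi$ sending the Artin-Schreier generator to $X$ and the $3$-central generator to $x$.

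The image of $\psi$ contains $x$ and $\beta^{-1}y_1x$, hence also $y_1=\alpha^{-1}\beta Xx^2$, the subring $Z_0$, and therefore the eliminated $y_2$; so $\psi$ is surjective. For injectivity, $A$ is a classical degree-$3$ cyclic Azumaya algebra over the Dedekind domain $Z_0$ (the $3$-central generator has cube $\alpha\in F^\times\subseteq Z_0^\times$), so every two-sided ideal of $A$ has the form $IA$ for some ideal $I\subseteq Z_0$. Since $Z_0$ injects into $Z(C_\Phi)\subseteq C_\Phi$ by the preceding lemma, we get $\ker\psi\cap Z_0=0$, whence $\ker\psi=0$. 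Therefore $\psi$ is an isomorphism; this proves (1), and by identifying centers yields $Z(C_\Phi)=Z_0\cong F[E_\Delta]$, which is (2).

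For (3), since $C_\Phi\cong A$ is Azumaya of rank $9$ over the Dedekind domain $Z_0\cong F[E_\Delta]$, its simple homomorphic images are in bijection with the maximal ideals of $Z_0$, equivalently with the Galois orbits of $\bar F$-rational points on $E_\Delta$. Using the identification $w\leftrightarrow r$, $y_1^3\leftrightarrow s+\beta^3$ from the preceding lemma, the specialization of $A$ at the orbit containing $(r_0,s_0)$ is precisely $[\alpha\beta^{-3}(s_0+\beta^3),\alpha)_{3,F[r_0,s_0]}$. The main obstacle in this plan is the verification of the identity $X^3-X=\alpha\beta^{-3}y_1^3$: while routine, it is a delicate commutator computation that crucially uses $\charac{F}=3$ in order to cancel the stray term $3\beta^{-2}y_1^2x^2$.
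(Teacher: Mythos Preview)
Your argument is correct and rests on exactly the same key computation as the paper: both you and the authors introduce an Artin--Schreier element built from $x$ and $y_1$ (your $X=\beta^{-1}y_1x$ is the paper's $z=\beta^{-1}xy_1$ shifted by $1$, since $xy_1-y_1x=\beta$) and verify the identity $X^3-X=\alpha\beta^{-3}y_1^3$, which identifies $C_\Phi$ with the cyclic algebra $[\alpha\beta^{-3}y_1^3,\alpha)_3$.

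The packaging is slightly different, and in a way that is arguably cleaner. The paper first tensors with the fraction field $q(Z_0)$ to recognize $C_\Phi\otimes q(Z_0)$ as a cyclic division-type algebra, reads off the center from that, and then checks each simple image separately to conclude Azumaya. You instead build the cyclic Azumaya algebra $A=[\alpha\beta^{-3}y_1^3,\alpha)_{3,Z_0}$ directly over $Z_0$, map it onto $C_\Phi$, and kill the kernel using the ideal correspondence for Azumaya algebras. This gives the isomorphism, the center, and the Azumaya property in one stroke, and the description of the simple images then comes for free by specialization.

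One small imprecision: you twice call $Z_0$ a Dedekind domain, but the theorem only assumes $\beta\neq0$, and the preceding lemma guarantees merely that $Z_0\cong F[E_\Delta]$ is an integral domain; it is Dedekind only when $\Delta\neq0$. Fortunately your argument never actually uses the Dedekind property---the ideal correspondence $I\mapsto IA$ holds for Azumaya algebras over any commutative ring, and $A$ is Azumaya over $Z_0$ simply because the $3$-central generator cubes to the unit $\alpha$---so you should just delete the word ``Dedekind''.
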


\begin{proof}
In this case, $y_2=\beta^{-1}(w-\gamma x-y_1^2)$. Let $z=\beta^{-1} x y_1$. It is a straight-forward calculation to see that $x z-z x=x$ and $z^3-z=\alpha\beta^{-3} y_1^3$. Consequently, in $C_\Phi \otimes_{F[w,y_1^3]} q(F[w,y_1^3])$, $x$ and $z$ generate over $q(F[w,y_1^3])$ a cyclic algebra of degree $3$ in which $x$ is $3$-central and $z$ is Artin-Schreier. The subalgebra $q(F[w,y_1^3])[x,z]$ in fact contains all the generators of $C_\Phi$, and therefore $q(F[w,y_1^3])[x,z]=C_\Phi \otimes q(F[w,y_1^3])$. In particular, the center of  $C_\Phi \otimes q(F[w,y_1^3])$ is $q(F[w,y_1^3])$, and hence the center of $C_\Phi$  is $F[w,y_1^3]$, which is isomorphic to the coordinate ring $F[r,s]$ of the affine curve $E_\Delta$ by the Lemma above. Identifying $F[w,y_1^3]$ with $F[r,s]$, we have $r=w$ and $s=y_1^3-\beta^3$.

Let there be a simple homomorphic image $A$ of $C_\Phi$. Let $r_0, s_0, x'$ and $y_1'$ be the images in $A$ of $r, s, x$ and $y_1$, respectively. In particular $x'^3=\alpha$ and $y_1'^3=s_0+\beta^3$. Furthermore, $A$ is generated by $x'$ and $z'=\beta^{-1} x' y_1'$ over $F[r_0,s_0]$, where $z'^3-z'=\alpha\beta^{-3} (s_0+\beta^3)$. These two elements satisfy $x' z'-z' x'=x'$, and therefore $A$ is a cyclic algebra of degree 3 over $F[r_0,s_0]$ in which $x'$ is $3$-central and $z'$ is Artin-Schreier. Hence $A$ has the symbol presentation  $[z'^3-z',x'^3)_{3,F[r_0,s_0]}=[\alpha\beta^{-3} (s_0+\beta^3),\alpha)_{3,F[r_0,s_0]}$. In particular, this implies that $C_\Phi$ is Azumaya of rank 9. Consequently, the simple homomorphic images of $C_\Phi$ are determined by the maps taking $F[r,s]$ to $F[r_0,s_0]$ for $\bar{F}$-rational points $(r_0,s_0)$ on the curve $E_\Delta$, whose formula is given as above, and this provides a one-to-one correspondence between the Galois orbits of the $\bar{F}$-rational points on $E_\Delta$ and the simple homomorphic images of $C_\Phi$.
\end{proof}

In case $\beta=0$, $\gamma \neq 0$ and furthermore $\delta\neq 0$, we can simply switch the roles of $x$ and $y$ and get a similar result to Theorem \ref{betaneq}. What remains is the case of $\beta=\gamma=0$.

\begin{thm}
If $\beta=\gamma=0$ then
\begin{enumerate}
\item[(1)] The center of $C_\Phi$ is the polynomial ring $F[y_1]$.
\item[(2)] The algebra $C_\Phi[y_1^{-1}]$ is Azumaya of rank 9 with the Laurent polynomial ring $F[y_1,y_1^{-1}]$ as its center.
\item[(3)] There is a one-to-one correspondence between the Galois orbits of $\bar{F}^\times$ and the simple homomorphic images of $C_\Phi[y_1^{-1}]$, taking each Galois orbit containing $s_0 \in \bar{F}^\times$ to $[\alpha (s_0^3+\delta) s_0^{-3},\alpha)_{3,F[s_0]}$
\item[(4)] The algebra $C_\Phi$ is not Azumaya.
\end{enumerate}
\end{thm}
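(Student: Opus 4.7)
My plan is to mirror the structure of the $\beta\neq 0$ case (\Tref{betaneq}), replacing the Artin--Schreier witness $z=\beta^{-1}xy_1$ there by one constructed from $y_1$ itself after inverting it. Specializing \eqref{y012}, \eqref{y12}, \eqref{y21^3} to $\beta=\gamma=0$ gives $y_0=0$, $[y_1,y_2]=0$ and $y_2^3=y_1^3+\delta$; together with $xy_1=y_1 x$ and $xy_2-y_2 x=y_1$, these exhibit $y_1$ as central in $C_\Phi$ and show that any word in $x,y_2$ reduces to an $F[y_1]$-linear combination of the nine monomials $\{x^iy_2^j : 0\le i,j\le 2\}$. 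Passing to $C_\Phi[y_1^{-1}]$ and setting $v=xy_2y_1^{-1}$, one checks $xvx^{-1}=v+1$ using $xy_2=y_2 x+y_1$, and expanding $v^3$ via $u=y_2y_1^{-1}$ (which satisfies $xu=ux+1$) yields $v^3-v=\alpha u^3=\alpha(y_1^3+\delta)y_1^{-3}$. Hence the subalgebra $F[y_1^{\pm 1}]\langle x,v\rangle$ realizes the cyclic algebra $[\alpha(y_1^3+\delta)y_1^{-3},\alpha)_{3,F[y_1^{\pm 1}]}$, and since $y_2=y_1(v-1)x^{-1}$ lies in it, this cyclic algebra equals all of $C_\Phi[y_1^{-1}]$. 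This yields (2): the localization is Azumaya of rank $9$ with center $F[y_1^{\pm 1}]$.

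For (1), the nine monomials $\{x^iy_2^j\}$ form a spanning set of the free rank-$9$ module $C_\Phi[y_1^{-1}]$ over $F[y_1^{\pm 1}]$, hence a basis (a generating set of cardinality equal to the rank of a free module over a commutative ring is automatically a basis, by Vasconcelos' surjective-endomorphism theorem). The resulting surjection $F[y_1]^{9}\twoheadrightarrow C_\Phi$ becomes an isomorphism after inverting $y_1$, so its kernel is $y_1$-torsion inside the torsion-free module $F[y_1]^{9}$ and is therefore zero. Thus $C_\Phi$ is itself $F[y_1]$-free of rank $9$ on $\{x^iy_2^j\}$; reading off the coefficient of the basis element $1$ then gives $Z(C_\Phi)=Z(C_\Phi[y_1^{-1}])\cap C_\Phi=F[y_1^{\pm 1}]\cap C_\Phi=F[y_1]$. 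Statement (3) follows from the Azumaya--Brauer correspondence applied to $C_\Phi[y_1^{-1}]$: simple homomorphic images match maximal ideals of $F[y_1^{\pm 1}]$, equivalently Galois orbits of $\bar F^\times$, and specializing the symbol at $y_1\mapsto s_0$ produces $[\alpha(s_0^3+\delta)s_0^{-3},\alpha)_{3,F[s_0]}$.

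Finally, (4) comes from specializing at the maximal ideal $(y_1)\subseteq F[y_1]$: in $C_\Phi/(y_1)C_\Phi$ the relation $xy_2-y_2 x=y_1$ degenerates to $xy_2=y_2 x$, so this fibre is the commutative algebra $F[x,y_2]/(x^3-\alpha,y_2^3-\delta)$ of dimension $9$ over $F$, which cannot be a central simple $F$-algebra of degree $3$. The step I expect to be most delicate is the freeness of $C_\Phi$ over $F[y_1]$ used in (1); the localization argument above reduces it to the spanning by the nine monomials together with the fact that any surjective endomorphism of a finitely generated module over a commutative ring is an isomorphism.
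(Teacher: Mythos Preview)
Your proof follows the paper's route essentially step for step: both invert the central element $y_1$, introduce the Artin--Schreier element $v=z=xy_2y_1^{-1}$, verify $xvx^{-1}=v+1$ and $v^3-v=\alpha(y_1^3+\delta)y_1^{-3}$, identify the localization with the resulting degree-$3$ cyclic algebra, and read off (1)--(4) from there (including the same commutative fibre at $y_1=0$ for (4)). Your Vasconcelos-plus-localization argument for the $F[y_1]$-freeness of $C_\Phi$ is more explicit than the paper's one-line assertion that ``$y_1$ generates over $F$ a free algebra in one indeterminate,'' but note that both versions leave the same point implicit---namely that the surjection from the abstract cyclic Azumaya algebra onto $C_\Phi[y_1^{-1}]$ is actually injective (equivalently, that $y_1$ is transcendental in $C_\Phi$)---which is most cleanly supplied by exhibiting a single representation over $F(t)$ sending $y_1\mapsto t$.
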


\begin{proof}
In this case, the algebra $C_\Phi$ is an $F$-algebra generated by $x,y_1, y_2$ subject to the relations $x^3=\alpha$, $[x, y_1]=[y_2, y_1]=0$, $x y_2-y_2 x=y_1$ and $y_2^3-y_1^3=\delta$. Therefore $y_1$ is central in $C_\Phi$ and it generates over $F$ a free algebra in one indeterminate.

The algebra $C_\Phi \otimes_{F[y_1]} q(F[y_1])$ contains the elements $z=x y_2 y_1^{-1}$ and $x$. By a straight-forward calculation we see that $x z-z x=x$ and $z^3-z=\alpha y_2^3 y_1^{-3}$. Since $y_2^3-y_1^3=\delta$, we obtain $z^3-z=\alpha (\delta+y_1^3) y_1^{-3} \in q(F[y_1])$. Thus the $q(F[y_1])$-subalgebra of $C_\Phi \otimes q(F[y_1])$ generated by $x,z$ is cyclic of degree 3, and since it contains all the generators of $C_\Phi$, we see that $q(F[y_1])[x,z]=C_\Phi \otimes q(F[y_1])$. Therefore the center of $C_\Phi \otimes q(F[y_1])$ is $q(F[y_1])$, and hence the center of $C_\Phi$ is $F[y_1]$.

Let $A$ be a simple homomorphic image of $C_\Phi[y_1^{-1}]$. The image of $y_1$ in $A$ is some element $s_0 \in \bar{F}^\times$. Let $x'$ and $y_2'$ be the images of $x$ and $y_2$ in $A$. Now, $x'$ and $z'=x' y_2' s_0^{-1}$ generate a cyclic $F[s_0]$-subalgebra of degree 3, and since they also generate $A$ over $F[s_0]$, we conclude that $A$ is a cyclic algebra over $F[s_0]$ of degree $3$ with the symbol presentation $[z'^3-z',x'^3)_{3,F[s_0]}=[\alpha (s_0^3+\delta) s_0^{-3},\alpha)_{3,F[s_0]}$. Therefore $C_\Phi[y_1^{-1}]$ is Azumaya of rank 9 and the statement (3) follows.

The algebra $C_\Phi$ is not Azumaya, however, because there is one image that is obtained by sending $y_1$ to $0$, namely the commutative $F$-algebra generated by the images $\bar{x}, \bar{y_2}$ of $x$, $y_2$, satisfying $\bar{x}^3=\alpha, \bar{y_2}^3=\delta$.
\end{proof}

\subsection{$e \neq 0$}

By changing the variable $X$ with $X'=e X$, we could assume that $e=1$ in the first place.
Now, by choosing the new pair of variables $X'=X+Y$ and $Y'=X-Y$, we then may assume the polynomial $\Phi$ is of the form $$\Phi(Z,X,Y)=Z^3-(X^2-Y^2) Z-(\alpha X^3+\beta X^2 Y+\gamma X Y^2+\delta Y^3).$$
The algebra $C_\Phi$ thus in this case is
$$F\langle x,y\colon  x^3-x=\alpha, y^3+y=\delta, x^2 * y-y=\beta, x * y^2+x=\gamma\rangle.$$

The element $x$ is Artin-Schreier.
According to Lemma \ref{decomcharp}, $y=y_0+y_1+y_2$ such that $y_k x-x y_k=k y_k$ for $k=0,1,2$.
Substituting that in $x^2 * y-y=\beta$ leaves $y_0=-\beta$.
From $x * y^2+x=\gamma$ we then obtain $y_1 y_2-y_2 y_1+x=\gamma$.
Furthermore, a straight-forward calculation shows that $y^3+y=\delta$ becomes
\begin{equation}\label{y12^3}
y_1^3+y_2^3=\delta+\beta^3+\beta.
\end{equation}
One can check that $w=y_2 y_1-x^2+(1-\gamma) x$, $y_1^3$ and $y_2^3$ are central in $C_\Phi$.

\begin{lem}
The subalgebra $F[w,y_1^3]$ of the center of $C_\Phi$ is isomorphic to the coordinate ring of the affine curve $$E : s^2=r^3+r^2-(\gamma^2+\gamma) r-\alpha^2-\alpha \gamma^3+\alpha \gamma+(\delta+\beta^3+\beta)^2.$$ In particular it is an integral domain.
\end{lem}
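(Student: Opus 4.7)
The plan is to compute $y_1^3 y_2^3$ as a polynomial in $w$; combined with the already-known relation $y_1^3 + y_2^3 = K$ from \Eq{y12^3} (where $K := \delta + \beta^3 + \beta$), this produces a monic quadratic for $y_1^3$ over $F[w]$ which, under the substitution $r := -w$, $s := y_1^3 + K$, becomes the defining equation of $E$.

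For the computation of $y_1^3 y_2^3$, I would pass to the localization $C_\Phi[y_1^{-1}]$. Writing $P(T) := T^2 + (\gamma - 1)T + w$, the very definition of $w$ gives $y_2 y_1 = P(x)$, so $y_2 = P(x)\, y_1^{-1}$. The Artin--Schreier relation $[y_1, x] = y_1$ is equivalent to $y_1 x y_1^{-1} = x + 1$, so $y_1^{-1} f(x) y_1 = f(x-1)$ for every $f \in F[x]$. Iterating,
\begin{equation*}
y_2^3 \;=\; \bigl(P(x)\, y_1^{-1}\bigr)^3 \;=\; P(x)\, P(x-1)\, P(x-2)\, y_1^{-3},
\end{equation*}
so $y_1^3 y_2^3 = P(x)\, P(x-1)\, P(x-2)$, a product invariant under the shift $x \mapsto x+1$ and hence, once we reduce via $x^3 - x = \alpha$, a polynomial in $w$ alone.

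To obtain the explicit form I would factor $P(T) = (T - \rho_1)(T - \rho_2)$ over $\bar{F}$ and use the characteristic-$3$ identity $\prod_{k \in \mathbb{F}_3}(T-k) = T^3 - T$ together with Frobenius $(x - \rho_j)^3 = x^3 - \rho_j^3$ to rewrite $\prod_k P(x-k) = \bigl[\alpha - (\rho_1^3 - \rho_1)\bigr]\bigl[\alpha - (\rho_2^3 - \rho_2)\bigr]$. Reducing $\rho_j^3$ via $\rho_j^2 = (1-\gamma)\rho_j - w$ and applying Vieta's relations $\rho_1 + \rho_2 = 1 - \gamma$, $\rho_1 \rho_2 = w$ yields
\begin{equation*}
y_1^3 y_2^3 \;=\; w^3 - w^2 - (\gamma^2+\gamma)\,w + \alpha^2 + \alpha\gamma^3 - \alpha\gamma.
\end{equation*}
Since $K/2 = -K$ in characteristic $3$, the substitution $s = y_1^3 + K$, $r = -w$ then converts the quadratic $T^2 - KT + y_1^3 y_2^3 = 0$ satisfied by $y_1^3$ into exactly the equation of $E$, producing a surjection $F[E] \twoheadrightarrow F[w, y_1^3]$.

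Finally, $F[E]$ is a one-dimensional integral domain, since the defining polynomial is irreducible in $F[r, s]$ (its right-hand side being a non-square monic cubic in $r$). The surjection is therefore an isomorphism as soon as $F[w, y_1^3]$ is not a finite field extension of $F$, which follows from a PBW-type $27$-element generating set for $C_\Phi$ over $F[w, y_1^3]$ (entirely analogous to \Lref{27fg}) together with the infinite-dimensionality of $C_\Phi$ over $F$. The main technical obstacle is the characteristic-$3$ evaluation of the triple product $\prod_k P(x-k)$ modulo $x^3 - x - \alpha$; the resultant/Vieta reformulation reduces it to a symmetric function computation and keeps everything under control.
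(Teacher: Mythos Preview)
Your proof is correct and reaches the same relation as the paper, but by a genuinely different computational route. The paper simply expands $w^3 = (y_2 y_1 - x^2 + (1-\gamma)x)^3$ by brute force, reads off $w^3 = y_2^3 y_1^3 + w^2 + (\gamma^2 + \gamma)w - \alpha^2 - \alpha\gamma^3 + \alpha\gamma$, and then substitutes $y_2^3 = K - y_1^3$. Your approach---writing $y_2^3 y_1^3 = P(x)P(x-1)P(x+1)$ and recognizing this as the resultant of $P(T)$ against $T^3 - T - \alpha$, then evaluating via Vieta on the roots of $P$---is more conceptual and explains \emph{why} the answer lies in $F[w]$: the triple product is fixed by the shift $x \mapsto x+1$, hence lands in the $\mathbb{Z}/3$-invariants of $F[w][x]/(x^3 - x - \alpha)$, namely $F[w]$. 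For the isomorphism with $F[E]$, the paper simply asserts that the relation found is the \emph{only} one; your Krull-dimension argument (the kernel of $F[E] \twoheadrightarrow F[w,y_1^3]$ is either zero or maximal, and the latter is ruled out by finite generation of $C_\Phi$ over $F[w,y_1^3]$ together with $\dim_F C_\Phi = \infty$) is more honest, though you are taking the infinite-dimensionality of $C_\Phi$ for granted. That can be justified, for instance by exhibiting a one-parameter family of inequivalent $3$-dimensional representations, but it does need to be said.

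One small simplification: the localization at $y_1$ is unnecessary. The identity $y_2^3 y_1^3 = P(x)P(x-1)P(x-2)$ already holds in $C_\Phi$: from $y_2 y_1 = P(x)$ and $f(x)\,y_1 = y_1\,f(x-1)$ one gets
\[
y_2^3 y_1^3 \;=\; y_2^2\,P(x)\,y_1^2 \;=\; y_2\,P(x)P(x-1)\,y_1 \;=\; P(x)P(x-1)P(x-2)
\]
directly, so there is no need to pass to a localization or to worry about whether the localization map is injective.
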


\begin{proof}
A straight-forward calculation shows that
\begin{equation*}
\begin{aligned}
w^3 & =y_2^3 y_1^3+w^2+(\gamma^2+\gamma) w-\alpha^2-\alpha \gamma^3+\alpha \gamma\\
 & =(\delta+\beta^3+\beta) y_1^3-y_1^6+w^2+(\gamma^2+\gamma) w-\alpha^2-\alpha \gamma^3+\alpha \gamma.
\end{aligned}
\end{equation*}
Thus $F[w, y_1^3]$ is the algebra over $F$ generated by $w$ and $y_1^3$ subject only to the relation in the equation above. The map defined by sending $r, s$ to $-w, y_1^3+(\delta+\beta^3+\beta)$ then gives an $F$-algebra isomorphism.
\end{proof}


\begin{thm}
Assuming that $\delta+\beta^3+\beta \neq 0$,
\begin{enumerate}
\item[(1)] The algebra $C_\Phi$ is Azumaya of rank 9.
\item[(2)] The center of $C_\Phi$ is the subalgebra $F[w, y_1^3]$, which is isomorphic to the coordinate ring of $E$.
\item[(3)] There is a one-to-one correspondence between the Galois orbits of $\bar{F}$-rational points on $E$ and the simple homomorphic images of $C_\Phi$, taking each Galois orbit containing point $(r_0,s_0)$ to the algebra $[\alpha,s_0-(\delta+\beta^3+\beta))_{3,F[r_0,s_0]}$ if $s_0 \neq \delta+\beta^3+\beta$, and to $[-\alpha,\delta+\beta^3+\beta)_{3,F[r_0]}$ if $s_0=\delta+\beta^3+\beta$.
\end{enumerate}
\end{thm}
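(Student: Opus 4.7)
The plan is to mirror the strategy of Section~\ref{Seccharnot3}: first establish module-finiteness of $C_\Phi$ over the candidate central subring $F[w,y_1^3]$, then realize the two localizations at $y_1^{-3}$ and $y_2^{-3}$ as Artin--Schreier symbol algebras of degree $3$, and finally conclude Azumaya-ness from the two opens covering $\mathrm{Spec}\,F[w,y_1^3]$. The hypothesis $\delta+\beta^3+\beta\ne 0$ plays the role of the ``$D\ne 0$'' assumption of \Pref{eitheror}: combined with \Eq{y12^3} it guarantees that $y_1^3$ and $y_2^3$ have disjoint zero loci in $\mathrm{Spec}\,F[w,y_1^3]$.

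First I would verify, by repeated application of $x^3-x=\alpha$, the commutator relation $y_1y_2-y_2y_1=\gamma-x$, the definition $w=y_2y_1-x^2+(1-\gamma)x$, and \Eq{y12^3}, that the $27$ monomials $x^iy_1^jy_2^k$ ($0\le i,j,k\le 2$) generate $C_\Phi$ as an $F[w,y_1^3]$-module; this is the analogue of \Lref{27fg}. In $C_\Phi[y_1^{-3}]$ the defining relation of $w$ yields $y_2=(w+x^2-(1-\gamma)x)y_1^{-1}$, so the localization is generated by $x$ and $y_1^{\pm 1}$ over $F[w,y_1^{\pm 3}]$ subject only to $x^3-x=\alpha$ and $y_1xy_1^{-1}=x+1$; this identifies it with the symbol algebra $[\alpha,y_1^3)_{3,F[w,y_1^{\pm 3}]}$, providing the analogue of \Pref{isom} and \Cref{L1}. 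Symmetrically, using $y_2xy_2^{-1}=x-1$ together with the substitution $\tilde x=-x$ (which turns $x^3-x=\alpha$ into $\tilde x^3-\tilde x=-\alpha$ and $y_2xy_2^{-1}=x-1$ into $y_2\tilde xy_2^{-1}=\tilde x+1$), I obtain $C_\Phi[y_2^{-3}]\cong[-\alpha,y_2^3)_{3,F[w,y_2^{\pm 3}]}$.

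Parts (1) and (2) then follow by patching: module-finiteness together with the two local trivializations makes $C_\Phi$ Azumaya of rank $9$ over $F[w,y_1^3]\cong F[E]$, and in particular the center coincides with $F[w,y_1^3]$. For part (3), the Azumaya dictionary gives a bijection between Galois orbits of $\bar F$-rational points on $E$ and simple homomorphic images. At $(r_0,s_0)$ the identifications from the preceding lemma specialize $y_1^3$ to $s_0-(\delta+\beta^3+\beta)$ and, via \Eq{y12^3}, $y_2^3$ to $-s_0-(\delta+\beta^3+\beta)$. When $s_0\ne\delta+\beta^3+\beta$, the $y_1^{-3}$-localization gives $[\alpha,s_0-(\delta+\beta^3+\beta))_{3,F[r_0,s_0]}$; when $s_0=\delta+\beta^3+\beta$, instead $y_1^3\to 0$ while $y_2^3\to\delta+\beta^3+\beta\ne 0$, and the $y_2^{-3}$-localization yields $[-\alpha,\delta+\beta^3+\beta)_{3,F[r_0]}$ (here $s_0$ is determined by $r_0$ via the curve equation, so the residue field collapses to $F[r_0]$).

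The main obstacle is the localization step: after eliminating $y_2$, I must verify that the $9$ monomials $x^iy_1^j$ remain $F[w,y_1^{\pm 3}]$-linearly independent, i.e.\ that no relation in $C_\Phi$ forces a further collapse beyond the cyclic-algebra relations. The cleanest route is likely to mimic \Pref{isom}: construct an explicit $F$-algebra map $\phi\colon C_\Phi\to[\alpha,S)_{3,F(E)}$, check that every defining relation of $C_\Phi$ is preserved, and deduce injectivity on the localization by pairing the $27$-element $F[w,y_1^3]$-generating set against $27$ $F[S^{\pm 1}]$-linearly independent images in the codomain.
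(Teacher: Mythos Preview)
Your plan is correct and very close to the paper's argument; the main difference is in how Azumaya-ness is extracted. The paper does not set up two localizations and patch. Instead it tensors $C_\Phi$ with the quotient field $q(F[w,y_1^3])$, observes that there $y_1$ is invertible so $y_2=(w+x^2-(1-\gamma)x)y_1^{-1}$, and hence $x,y_1$ generate; since these satisfy the defining relations of $[\alpha,y_1^3)_{3,q(F[w,y_1^3])}$ and that cyclic algebra is simple, the surjection from it is an isomorphism. This immediately gives (2) and, more importantly, dissolves your ``main obstacle'': simplicity of the target replaces any explicit linear-independence check or the construction of a map $\phi$ as in \Pref{isom}. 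For (1) and (3) the paper argues fiberwise rather than by an open cover: given a simple image $A$, either $y_1^3\mapsto s_0-(\delta+\beta^3+\beta)\ne 0$ and $A=[\alpha,s_0-(\delta+\beta^3+\beta))_3$ via $x,y_1$, or $y_1^3\mapsto 0$, whence \Eq{y12^3} and the hypothesis force $y_2^3\mapsto\delta+\beta^3+\beta\ne 0$, and $A=[-\alpha,\delta+\beta^3+\beta)_3$ via $-x,y_2$. Azumaya of rank $9$ then follows because every simple quotient is degree-$3$ central simple. Your two-open patching is an equally valid route and yields the same formulas; the paper's version just trades the geometric covering picture for the simpler observation that the generic fiber and each closed fiber are already cyclic of degree $3$.
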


\begin{proof}
The element $y_1$ is invertible in $C_\Phi \otimes_{F[w,y_1^3]} q(F[w,y_1^3])$, and so inside this algebra $y_2=(w+x^2-(1-\gamma)x) y_1^{-1}$. Thus $C_\Phi \otimes q(F[w,y_1^3])$ is generated over $q(F[w,y_1^3])$ by $x$ and $y_1$.
Since $x$ is Artin-Schreier, $y_1^3$ is central and $y_1 x-x y_1=y_1$, the algebra $C_\Phi \otimes q(F[w,y_1^3])$ is the cyclic algebra $[\alpha,y_1^3)_{3,q(F[w,y_1^3])}$.
Thus the center of $C_\Phi$ being the intersection of $C_\Phi$ and the center of $C_\Phi \otimes q(F[w,y_1^3])$ is $F[w,y_1^3]$.

Every homomorphism from $C_\Phi$ to a simple algebra $A$ takes $F[w,y_1^3]$ to a field $F[r_0,s_0]$ where $(r_0,s_0)$ is an $\bar{F}$-rational point on the affine curve $E$ and $y_1^3$ is sent to $s_0-(\delta+\beta^3+\beta)$ by the lemma above. If $s_0\neq\delta+\beta^3+\beta$ then $A$ is generated by the images $x', y_1'$ of $x, y_1$ such that $A$ is the cyclic algebra $[x'^3-x',y_1'^3)_{3,F[r_0,s_0]}=[\alpha,s_0-(\delta+\beta^3+\beta))_{3,F[r_0,s_0]}$.
If $s_0=\delta+\beta^3+\beta$ then $y_1^3$ is sent to $0$ and hence $y_2$ is sent to the invertible element $s_0=\delta+\beta^3+\beta$ by Equation \eqref{y12^3}. This means that $A$ is generated by the images $x', y_2'$ of $-x$, $y_2$, satisfying $y_2' x'-x' y_2'=y_2'$. Thus $A$ is the cyclic algebra $[x'^3-x',y_2'^3)_{3,F[r_0]}=[-\alpha,\delta+\beta^3+\beta)_{3,F[r_0]}$. In particular, it implies that $C_\Phi$ is Azumaya of rank 9 and the statement (3) follows.
\end{proof}

\begin{rem}
If $\delta+\beta^3+\beta=0$ then for similar arguments as in the last proof, $C_\Phi[y_1^{-3}]$ is Azumaya of rank 9, and there is a one-to-one correspondence between its simple homomorphic images and the Galois orbits of the $\bar{F}$-rational points $(r_0,s_0)$ on $E$ with $s_0 \neq 0$, taking each such Galois orbit to the algebra $[\alpha,s_0)_{3,F[r_0,s_0]}$.

In this case, the algebra $C_\Phi$ is not necessarily Azumaya, for instance if furthermore  $\gamma^3-\gamma-\alpha=0$ then $F$ is a simple homomorphic image of $C_\Phi$, and then $C_\Phi$ is definitely not Azumaya.
\end{rem}

\section{Algebraically closed and characteristic zero}\label{Secchar0}

In this section we study representations of the generalized Clifford algebra of the monic polynomial $\Phi(Z,X_1,\dots,X_n)=Z^d-\sum_{k=1}^d f_k(X_1,\dots,X_n) Z^{d-k}$ where each $f_k$ is a homogeneous polynomial over the field $F$ of degree $k$.

Recall that an $m$-dimensional representation of an $F$-algebra $A$ is an $F$-algebra homomorphism
$\phi\colon A\rightarrow M_m (K)$ where $K$ is an extension field of $F$. Such a representation is said to be \emph{irreducible} if $\phi(A)K = M_m (K)$. The algebra $\phi (A)$ is sometimes referred to as a representation of $A$.
The following result is a generalization of \cite[Proposition 2.13]{Kuo} with a similar proof.

\begin{prop}\label{greater}
If $\Phi$ is absolutely irreducible, then every representation of the generalized Clifford algebra $C_{\Phi}$ has dimension at least d.
\end{prop}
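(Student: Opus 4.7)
The plan is to mimic the argument of \cite[Proposition 2.13]{Kuo} by passing to a ``generic'' linear combination of the images of the generators. Let $\phi\colon C_{\Phi}\to M_m(K)$ be a representation and write $X_i=\phi(x_i)$. I would introduce algebraically independent indeterminates $t_1,\dots,t_n$ over $K$, set $L=K(t_1,\dots,t_n)$, and study the single matrix
\[
T\ =\ t_1 X_1+\dots+t_n X_n\ \in\ M_m(L).
\]
Its minimal polynomial $\mu(Z)\in L[Z]$ has degree at most $m$, so the whole argument reduces to producing an irreducible polynomial of degree $d$ over $L$ that annihilates $T$.

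The natural candidate is $\Phi(Z,t_1,\dots,t_n)$. To see that it annihilates $T$, recall that the defining identities of $C_{\Phi}$ say that $\Phi(a_1 x_1+\dots+a_n x_n,a_1,\dots,a_n)=0$ in $C_{\Phi}$ for every $(a_1,\dots,a_n)\in F^n$. Both sides are polynomial expressions in the $a_i$ with coefficients in $C_{\Phi}$, and since $F$ is infinite (being algebraically closed of characteristic zero) the identity must in fact hold after formally substituting the indeterminates $t_i$ for the $a_i$. Applying the base change of $\phi$ then yields $\Phi(T,t_1,\dots,t_n)=0$ in $M_m(L)$, and hence $\mu(Z)\mid\Phi(Z,t_1,\dots,t_n)$ in $L[Z]$.

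The crucial step is to verify that $\Phi(Z,t_1,\dots,t_n)$ is irreducible in $L[Z]$. Because $\Phi$ is absolutely irreducible over $F$, it remains irreducible over every field extension of $F$; applied to $K$, and after renaming the $X_i$ as $t_i$, this says that $\Phi(Z,t_1,\dots,t_n)$ is irreducible in $K[Z,t_1,\dots,t_n]=K[t_1,\dots,t_n][Z]$. Since $\Phi$ is monic in $Z$, Gauss's lemma then transfers irreducibility to the polynomial ring over the fraction field $K(t_1,\dots,t_n)=L$, giving what we need.

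Putting the pieces together, $\mu(Z)$ is a positive-degree divisor (any matrix in $M_m(L)$ with $m\geq 1$ has minimal polynomial of degree at least one, and $m\geq 1$ is forced by $\phi(1)=I$) of the irreducible polynomial $\Phi(Z,t_1,\dots,t_n)\in L[Z]$ of degree $d$, so $\mu(Z)=\Phi(Z,t_1,\dots,t_n)$ and hence $m\geq\deg\mu=d$. The step I expect to require the most care is the irreducibility of $\Phi(Z,t_1,\dots,t_n)$ over $L$: this is exactly where absolute irreducibility of $\Phi$ (rather than mere irreducibility over $F$) is essential, and one needs both the base-change stability of irreducibility and the monicity of $\Phi$ in $Z$ in order to descend from the polynomial ring $K[t_1,\dots,t_n][Z]$ to its localization $L[Z]$.
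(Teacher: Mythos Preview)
Your argument is essentially the paper's own proof: pass to the generic linear combination $T=\sum t_i\,\phi(x_i)\in M_m(K(t_1,\dots,t_n))$, observe that $\Phi(Z,t_1,\dots,t_n)$ annihilates $T$ and is irreducible over $K(t_1,\dots,t_n)$ by absolute irreducibility of $\Phi$ (the paper leaves the Gauss's-lemma step implicit), and conclude that $\Phi$ is the minimal polynomial of $T$, hence $m\ge d$. The only quibble is that in the paper this proposition is stated \emph{before} the standing hypothesis that $F$ is algebraically closed of characteristic zero, so your appeal to that hypothesis to ensure $F$ is infinite is slightly out of order---though the paper's own argument likewise tacitly needs $F$ large enough for the relations at all $F$-points to force the generic relation.
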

\begin{proof}
Let $\phi\colon C_{\Phi}\rightarrow M_m(K)$ be a representation. For each $j=1,\dots, n$, let $A_j$ be the image of $x_j$ under $\phi$. Then the element $X_1 A_1 +\cdots +X_n A_n$ in $M_m (K(X_1, \dots, X_n))$ satisfies $\Phi$ regarded as a polynomial in $Z$ with coefficients in $K(X_1, \dots, X_n)$. Note that the polynomial $\Phi$ in $Z$ is irreducible over the field $K(X_1, \dots, X_n)$; otherwise, $\Phi$ as a polynomial in $K[Z, X_1, \dots, X_{n}]$ would be reducible over $K$, a contradiction. Thus $\Phi$ is the minimal polynomial of $X_1 A_1 +\cdots +X_n A_n$ and hence $m$ is at least the degree of $\Phi$.
\end{proof}

From now on assume that $\Phi$ is absolutely irreducible. We have the following result analogous to \cite[Corollary 1.2]{HT}.

\begin{cor}\label{d-repn}
Every $d$-dimensional representation of $C_{\Phi}$ is Azumaya of rank $d^2$. Moreover the representation is irreducible and the kernel of the representation is a prime ideal of  $C_{\Phi}$.
\end{cor}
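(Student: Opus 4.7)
The plan is to prove irreducibility first, then read off Azumaya-ness and primeness from a density-plus-bootstrap argument using Proposition~\ref{greater}. Put $A_j = \phi(x_j)$, $B = \phi(C_\Phi)$, and $L = K(X_1,\dots,X_n)$. As in the proof of Proposition~\ref{greater}, the element $\xi = \sum_j X_j A_j \in M_d(L)$ has $\Phi(Z)$ as its minimal polynomial, and because $\deg\Phi = d = \dim_L L^d$, this is also its characteristic polynomial. Consequently $L[\xi]$ is a maximal commutative subfield of $M_d(L)$ of degree $d$ equal to its own centralizer, and $L^d$ is free of rank one over $L[\xi]$, hence $L[\xi]$-simple, and a fortiori $(B\cdot L)$-simple because $B\cdot L \supset L[\xi]$.

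For irreducibility, any proper nonzero $B$-submodule $W \subset K^d$ would tensor up to a proper nonzero $(B\cdot L)$-submodule $W\otimes_K L \subset L^d$, contradicting the last paragraph; so $K^d$ is a simple $B$-module. Jacobson's density theorem then yields $B\cdot K = \End_D(K^d)$, where $D = \End_B(K^d)$ is a finite-dimensional division $K$-algebra. Extending the $D$-action to $L^d$ by $L$-linearity, $D$ embeds into $\End_{B\cdot L}(L^d) \subset \End_{L[\xi]}(L^d) = L[\xi]$, which is a field, so $D$ itself is a field, of some degree $e \mid d$ over $K$. The presentation $B\cdot K \cong M_{d/e}(D)$ now re-exhibits $\phi$ as a $(d/e)$-dimensional representation of $C_\Phi$ over the field $D \supset F$, and Proposition~\ref{greater} applied to this reinterpretation forces $d/e \geq d$. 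Hence $e = 1$, $D = K$, and $B\cdot K = M_d(K)$.

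The remaining assertions follow. Since $Z(B)$ commutes with $B\cdot K = M_d(K)$ it must lie in $K$, so $Z(B) = B \cap K$; the surjection $B \otimes_{Z(B)} K \to M_d(K)$ is injective by a standard PI-theoretic dimension count (using that $B$ is prime PI of PI-degree $d$ splitting over $K$), which identifies $B$ as Azumaya of rank $d^2$ over $Z(B)$. Finally, $K^d$ is a faithful simple $B$-module---faithful because $B \hookrightarrow \End_K(K^d)$---so $B$ is left primitive, hence prime, and $\ker\phi$ is a prime ideal of $C_\Phi$. The main obstacle is the step where Proposition~\ref{greater} must be invoked on the compressed image: Proposition~\ref{greater} is stated over a \emph{field} extension of $F$, so one must first establish that the Schur commutant $D$ is commutative, which is exactly the content of the embedding $D \hookrightarrow L[\xi]$.
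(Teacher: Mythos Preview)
Your route to irreducibility is correct and genuinely different from the paper's. Observing that $\Phi$ is simultaneously the minimal and characteristic polynomial of $\xi$, so that $L^d$ is free of rank one over the field $L[\xi]$, is a clean way to force $B\cdot K = M_d(K)$; the embedding $D\hookrightarrow L[\xi]$ to show the Schur commutant is a field, followed by a second appeal to Proposition~\ref{greater} on the compressed representation, is exactly the right mechanism. The paper instead proves the Azumaya property first via Artin--Procesi and reads off irreducibility as a consequence, so your argument reverses the logical order.

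The Azumaya step, however, has a real gap. The isomorphism $B\otimes_{Z(B)}K\cong M_d(K)$ only controls $B$ at the generic point of $\operatorname{Spec} Z(B)$; it says nothing about $B/\mathfrak m B$ for maximal ideals $\mathfrak m$ of $Z(B)$, and $Z(B)$ need not be a field (e.g.\ when $\phi$ is injective in the $n=2$, $d=3$ situation, $Z(B)$ is the coordinate ring of an affine elliptic curve). The abstract implication you are using is false: with $B=F+tM_d(F[t])\subset M_d(F(t))$, $Z=F[t]$, $K=F(t)$, one has $B$ prime of PI-degree $d$ and $B\otimes_Z K=M_d(K)$, yet $B/tB$ carries a nonzero nilpotent ideal, so $B$ is not Azumaya. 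What is missing is precisely the paper's move: apply Proposition~\ref{greater} to \emph{every} simple homomorphic image of $B$, not just to $\phi$ itself; Artin--Procesi then upgrades this to the Azumaya conclusion. A smaller point: your primeness argument asserts that $K^d$ is a simple $B$-module, but your tensor-up step only excludes $K$-stable submodules, i.e.\ it shows simplicity over $B\cdot K$. This is harmless, since primeness of $B$ follows directly from $B\cdot K=M_d(K)$ being simple (extend two-sided ideals of $B$ to $B\cdot K$).
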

\begin{proof}
Let $\phi\colon C_{\Phi}\rightarrow M_d(K)$ be a $d$-dimensional representation. Then $\phi( C_{\Phi})$ satisfies all $d\times d$ multilinear identities. Assume $A$ is a simple homomorphic image of $\phi( C_{\Phi})$ satisfying some $l\times l$ multilinear identity for $l<d$. By the Artin-Procesi theorem, $l$ is at least the degree of $A$, say $r$. Let $L$ be a splitting field of $A$. Then $A\otimes_{Z(A)} L\cong M_r (L)$, where $Z(A)$ denotes the center of $A$. It follows that $ C_{\Phi}$ has a representation of dimension $r<d$, a contradiction to Proposition \ref{greater}. Thus no simple homomorphic image of  $\phi(C_{\Phi})$ satisfies any $l\times l$ multilinear identity, where $l<d$. Therefore by the Artin-Procesi theorem, $\phi( C_{\Phi})$ is Azumaya of rank $d^2$ over its center $Z=Z(\phi( C_{\Phi}))$. It follows that the $K$-subalgebra $\phi( C_{\Phi})K$ of $M_d (K)$ is Azumaya of rank $d^2$ over its center $ZK$. Thus $\phi( C_{\Phi})K = M_d (K)$; that  is, $\phi$ is irreducible. In particular, $Z\subseteq K$. Now, let $I$ and $J$ be two ideals of $\phi( C_{\Phi})$ such that $IJ=0$. Then $(I\cap Z)(J\cap Z)=0$ in $Z$, so $I\cap Z =0$ or $J\cap Z =0$. Since $\phi( C_{\Phi})$ is Azumaya, we then have $I=(I\cap Z)\phi( C_{\Phi})=0$ or $J=(J\cap Z)\phi( C_{\Phi})=0$. Thus $\phi( C_{\Phi})$ is a prime ring and hence the kernel of $\phi$ is a prime ideal of $C_{\Phi}$.
\end{proof}

Here comes the question whether there exists any
$d$-dimensional representation of  $C_{\Phi}$. From now on, the
field $F$ is assumed to be algebraically closed of characteristic 0,
and for an $m$-dimensional representation of $C_{\Phi}$, we mean an $F$-algebra
homomorphism $C_{\Phi}\rightarrow M_m(F)$.

Let $Y_{\Phi}$ be the hypersurface in $\mathbb{P}^n$ given by the equation \[Y_{\Phi}\colon\Phi(Z,X_1,\dots,X_n)=0.\] Let $R=F[X_1,\dots,X_n]$ equipped with a grading such that $\textrm{deg}(X_j)=1$. Let $S_{\Phi}$ be the homogeneous coordinate ring of $Y_{\Phi}$; that is,
\[S_{\Phi} = R[Z]/(\Phi(Z,X_1,\dots,X_n)).\] The gradation on $R$ extends naturally to $S_{\Phi}$.

The following result is an analogue of \cite[Proposition 1 in Section 1]{Van}. We first recall that two representations $\phi, \phi'\colon C_{\Phi}\rightarrow M_m (F)$ are equivalent if there exists an invertible matrix $Q\in  M_m (F)$ such that $\phi' (x_j)= Q \phi(x_j) Q^{-1}$, $j=1,\dots ,n$.

\begin{prop}\label{corresp1}
There is a one-to-one correspondence between
\begin{enumerate}
\item[(1)] Equivalence classes of $m$-dimensional representations of  $C_{\Phi}$.
\item[(2)] Graded isomorphism classes of graded $S_{\Phi}$-modules that are isomorphic to $R^m$ as $R$-modules.
\end{enumerate}
\end{prop}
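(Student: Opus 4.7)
The plan is to exhibit the correspondence in both directions explicitly and then verify it sends equivalence classes to graded isomorphism classes.

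First I would go from a representation to a graded module. Given $\phi\colon C_{\Phi}\to M_m(F)$, set $A_j=\phi(x_j)$ and form the degree-one matrix $N=X_1A_1+\cdots+X_nA_n\in M_m(R)_1$. I would use $N$ to define the $Z$-action on $R^m$ (equipped with its standard grading, generators in degree $0$); since $N$ has entries of degree $1$, this action raises degree by $1$, which is the correct behavior for multiplication by $Z$ in the graded algebra $S_{\Phi}$. The essential check is the matrix polynomial identity $\Phi(N,X_1,\dots,X_n)=0$ in $M_m(R)$: specializing $(X_1,\dots,X_n)\mapsto(a_1,\dots,a_n)$ reduces this to the image under $\phi$ of the defining relation of $C_{\Phi}$, which holds for every $(a_1,\dots,a_n)\in F^n$; since $F$ is infinite (algebraically closed of characteristic zero), a polynomial in $R$ that vanishes on all of $F^n$ is zero, so the identity upgrades from scalar evaluations to the full matrix identity in $R$.

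Next I would reverse the construction. Starting from a graded $S_{\Phi}$-module $M$ with a chosen graded $R$-linear isomorphism $M\cong R^m$, multiplication by $Z$ is an $R$-linear endomorphism of $R^m$ that raises degree by $1$, so it is represented by a unique matrix $N\in M_m(R_1)$. Since $R_1=\bigoplus_j FX_j$, I can write $N=\sum_j X_j A_j$ for unique $A_j\in M_m(F)$. The relation $\Phi(Z,X_1,\dots,X_n)\cdot v=0$ for $v\in M$ translates to $\Phi(N,X_1,\dots,X_n)=0$ in $M_m(R)$; specializing $X_j\mapsto a_j$ then recovers the Clifford relations, so $\phi(x_j):=A_j$ extends to a well-defined representation of $C_{\Phi}$. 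The two constructions are visibly mutually inverse (one recovers the same $A_j$ from the same matrix $N$ in both directions).

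To finish, I would match the equivalence relations. A graded $R$-linear isomorphism between two copies of $R^m$ with standard grading is exactly multiplication by some $Q\in\mathrm{GL}_m(F)$ (a degree-$0$ scalar matrix), and the requirement that $Q$ intertwine the two $Z$-actions, $QN=N'Q$, becomes $QA_j=A'_jQ$ on each $X_j$-coefficient, i.e.\ $\phi'(x_j)=Q\phi(x_j)Q^{-1}$. This is precisely the definition of equivalent representations, so the bijection descends to one between equivalence classes and graded isomorphism classes.

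The hard part is really only bookkeeping: ensuring the polynomial-identity transfer between scalar evaluations in $F^n$ and the matrix identity over $R$ (which needs $F$ infinite) is handled correctly in both directions, and that the grading convention on $R^m$ is pinned down so that ``degree-$1$ $R$-linear endomorphism'' translates without ambiguity to ``matrix with entries in $R_1$.'' Everything else is routine verification.
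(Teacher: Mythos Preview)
Your proposal is correct and follows essentially the same approach as the paper: both directions are built from the single degree-one matrix $N=\sum_j X_jA_j$ acting as $Z$ on $R^m$, and the equivalence/graded-isomorphism matching is handled via conjugation by $Q\in\mathrm{GL}_m(F)$. You are in fact slightly more explicit than the paper in justifying why $\Phi(N,X_1,\dots,X_n)=0$ holds as a polynomial identity in $M_m(R)$ (using that $F$ is infinite to pass from all scalar specializations to the identity over $R$), a step the paper leaves implicit.
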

\begin{proof}
Suppose that $\phi\colon C_{\Phi}\rightarrow M_m (F)$ is a representation sending $x_j$ to $A_j$, $j=1,\dots, n$. Define the graded $F$-algebra homomorphism $R[Z]\rightarrow M_m (R)$ by sending $X_j$ to $X_j I_m$, $j=1,\dots, n$ and $Z$ to $X_1 A_1 +\cdots+ X_n A_n$. Then this induces a graded homomorphism $\psi\colon S_{\Phi}\rightarrow M_m (R)$, which clearly is compatible with the $R$-module structure. Via the map $\psi$, $R^m$ forms a graded $S_{\Phi}$-module in the usual way. Conversely, every graded homomorphism from $S_{\Phi}$ into $M_m (R)$ which is compatible with the $R$-module structure gives rise to a representation of $C_{\Phi}$. More explicitly, let $\psi\colon S_{\Phi}\rightarrow M_m (R)$ be such a graded homomorphism. Because of the property of compatibility with the $R$-module structure, we have $\psi(X_j)=X_j I_m$, $j=1,\dots, n$. Let $B=(p_{rs}) \in M_m (R)$ be the image of $Z$ under $\psi$. It follows from $\Phi(B,X_1 I_m, \dots, X_n I_m)=0$ that each $p_{rs}$ is a homogeneous polynomial of degree 1 in $R$. For each $j=1,\dots, n$, let $A_j$ be the matrix in $M_m (F)$ such that the $rs$ entry is the coefficient of $X_j$ in $p_{rs}$, where $1\leq r,s \leq m$.  Then $\phi\colon C_{\Phi}\rightarrow M_m (F)$ sending $x_j$ to $A_j$, $j=1,\dots, n$, is a representation.

Now, suppose that $\phi, \phi' \colon C_{\Phi}\rightarrow M_m (F)$ sending $x_j$ to $A_j$, $A'_j$, respectively, are two equivalent representations; that is, there exists an invertible matrix $Q\in  M_m (F)$ such that $A'_j= Q A_j Q^{-1}$, $j=1,\dots ,n$. Let $\psi, \psi'\colon S_{\Phi}\rightarrow M_m (R)$ be the corresponding graded homomorphisms. Then $\psi'(X_j)=Q \psi(X_j) Q^{-1}$, $j=1,\dots, n+1$, where $X_{n+1}=Z$. Let $P=P'=R^m $ be the corresponding graded $S_{\Phi}$ modules via $\psi, \psi'$, respectively. The map $P\rightarrow P'$ defined by sending each $m$-tuple $\tilde{a}$ to $Q\tilde{a}$ is an isomorphism of graded $S_{\Phi}$-modules. Conversely, suppose that $P=P'=R^m$ form two graded $S_{\Phi}$-modules via the graded homomorphisms $\psi,\psi'\colon S_{\Phi}\rightarrow M_m (R)$, respectively, and that $\Psi\colon P\rightarrow P'$ is a graded $S_{\Phi}$-module isomorphism. Since $\Psi$ is of degree 0, it acts as the multiplication on the left by an invertible matrix $Q$ in $M_m (F)$. Furthermore, for each $\tilde{a}\in P$, since $\Psi(X_j\cdot \tilde{a})= X_j\cdot \Psi(\tilde{a})$, it follows that $Q \psi (X_j)= \psi' (X_j) Q$, $j=1,\dots, n+1$. Let $\phi, \phi'\colon C_{\Phi}\rightarrow M_m (F)$ be the two representations induced by $\psi, \psi'$  as described above. Then $\psi(Z)=\Sigma_{j=1}^n X_j\phi(x_j)$ and $\psi'(Z)=\Sigma_{j=1}^n X_j\phi'(x_j)$. Thus $\Sigma_{j=1}^n X_j Q\phi(x_j)=\Sigma_{j=1}^n X_j \phi'(x_j)Q$ and hence $\phi'(x_j)=Q\phi(x_j)Q^{-1}$ for each $j=1,\dots, n$; that is, $\phi$ and $\phi'$ are equivalent.
\end{proof}

From this proposition we derive the following corollary, which answers the second part of Question 5.2 in \cite{Pappacena} in a better way than Proposition \ref{greater}.

\begin{cor}\label{divide}
The dimension of every representation of $C_{\Phi}$ is divisible by $d$.
\end{cor}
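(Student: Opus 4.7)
The plan is to translate the divisibility statement via \Pref{corresp1} into a statement about $S_\Phi$-module structures, and then extract the factor of $d$ by passing to the generic fiber over $\mathrm{Spec}(R)$. Given any $m$-dimensional representation of $C_\Phi$, \Pref{corresp1} produces a graded $S_\Phi$-module $P$ that is isomorphic to $R^m$ as an $R$-module. The goal is to show $d \mid m$.

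The central observation is that $S_\Phi = R[Z]/(\Phi)$ is a free $R$-module of rank $d$, with basis $1, Z, \dots, Z^{d-1}$, because $\Phi$ is monic of degree $d$ in $Z$. Let $K = F(X_1,\dots,X_n)$ be the fraction field of $R$. Tensoring up, the $K$-algebra $L := S_\Phi \otimes_R K \cong K[Z]/(\Phi)$ has $K$-dimension $d$. I would next verify that $L$ is actually a field, i.e.\ that $\Phi$ remains irreducible over $K[Z]$. Since $\Phi$ is monic in $Z$, any factorization in $K[Z]$ would descend, by Gauss's lemma applied in the UFD $R$, to a factorization in $R[Z]$; the same argument over $\bar F[X_1,\dots,X_n]$ shows that absolute irreducibility of $\Phi$ already rules out a nontrivial factorization in $\bar F(X_1,\dots,X_n)[Z]$, and a fortiori in $K[Z]$. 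Hence $L$ is a field extension of $K$ of degree $d$.

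Now $P \otimes_R K$ carries two structures simultaneously: it is a $K$-vector space of dimension $m$ (since $P \cong R^m$ as $R$-modules), and it is a module over the field $L$ through the $S_\Phi$-action. Writing $r = \dim_L(P \otimes_R K)$, the two dimension counts give $m = d\cdot r$, which is exactly the desired divisibility.

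I do not anticipate a serious obstacle. The only place where anything slightly delicate enters is the passage from absolute irreducibility of $\Phi$ to irreducibility of $\Phi$ in $K[Z]$, and this is handled routinely by Gauss's lemma together with the fact that $\Phi$ is monic in $Z$. Everything else is formal once \Pref{corresp1} has identified representations with $R$-free $S_\Phi$-modules.
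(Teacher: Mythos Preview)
Your argument is correct and follows essentially the same route as the paper: both use \Pref{corresp1} to obtain an $S_\Phi$-module $P\cong R^m$, then exploit the irreducibility of $\Phi$ to conclude $d\mid m$ by a rank count. The paper phrases the last step as ``the $S_\Phi$-modules are torsion-free, hence have $R$-rank divisible by $d$,'' whereas you make this explicit by localizing at the generic point of $\mathrm{Spec}(R)$ and computing $\dim_K(P\otimes_R K)$ over the field $L=K[Z]/(\Phi)$; these are two verbalizations of the same idea.
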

\begin{proof}
Notice that $\Phi$ is assumed to be irreducible. In particular, $f_d(X_1, \dots, X_n)$ in the definition of $\Phi$ is nonzero. Thus the $S_{\Phi}$-modules given as in Proposition \ref{corresp1} are torsion-free, and hence must have $R$-ranks divisible by $d$.
\end{proof}

Since there is a one-to-one correspondence between graded
$S_{\Phi}$-modules and $\mathcal{O}_{Y_{\Phi}}$-modules, it follows that there
is a one-to-one correspondence between graded isomorphism classes of
graded $S_{\Phi}$-modules as in Proposition \ref{corresp1}(2) and
isomorphism classes of $\mathcal{O}_{Y_{\Phi}}$-modules $\mathcal{M}$
such that $\pi_{*}\mathcal{M}\cong
\mathcal{O}_{\mathbb{P}^{n-1}}^{m}$, where $\pi\colon Y_{\Phi}\rightarrow
\mathbb{P}^{n-1}$ is the projection which forgets the variable
$Z$. From now on assume that the hypersurface $Y_{\Phi}$ is
nonsingular, which implies that $\mathcal{M}$ is locally free.
Therefore, we conclude the following result from Proposition \ref{corresp1} and Corollary
\ref{divide}.

\begin{prop}\label{corresp2}
There is a one-to-one correspondence between
\begin{enumerate}
\item[(1)] Equivalence classes of $dr$-dimensional representations of  $C_{\Phi}$.
\item[(2)] Isomorphism classes of vector bundles $\mathcal{M}$ of rank $r$ on $Y_{\Phi}$ such that $\pi_{*}\mathcal{M}\cong \mathcal{O}_{\mathbb{P}^{n-1}}^{dr}$.
\end{enumerate}
\end{prop}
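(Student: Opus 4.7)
The plan is to assemble \Pref{corresp1}, the Serre correspondence for the projective hypersurface $Y_\Phi$, and \Cref{divide} into a single statement.

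First, I would invoke \Pref{corresp1} with $m=dr$ to reduce the claim to showing that graded isomorphism classes of graded $S_\Phi$-modules $P$ satisfying $P\cong R^{dr}$ as graded $R$-modules are in bijection with isomorphism classes of rank $r$ vector bundles $\mathcal{M}$ on $Y_\Phi$ satisfying $\pi_*\mathcal{M}\cong\mathcal{O}_{\mathbb{P}^{n-1}}^{dr}$.

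Second, I would apply the Serre--Proj equivalence between graded $S_\Phi$-modules (modulo the standard torsion identifications) and coherent sheaves on $Y_\Phi=\operatorname{Proj}(S_\Phi)$, denoting the resulting sheaf by $\mathcal{M}=\widetilde{P}$. Since $P$ is free over $R$ and $R\hookrightarrow S_\Phi$ is a finite graded extension, $P$ has no submodule supported in positive codimension on $Y_\Phi$, so no collapsing occurs and the sheafification functor restricts to a genuine bijection on the relevant subcategories. The projection $\pi\colon Y_\Phi\to\mathbb{P}^{n-1}$ corresponds to the graded inclusion $R\hookrightarrow S_\Phi$, and under sheafification $\pi_*\widetilde{P}$ is precisely the sheaf on $\mathbb{P}^{n-1}$ associated to $P$ viewed as a graded $R$-module. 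Hence the condition $P\cong R^{dr}$ as graded $R$-modules corresponds bijectively to $\pi_*\mathcal{M}\cong\mathcal{O}_{\mathbb{P}^{n-1}}^{dr}$.

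Third, I would use the nonsingularity of $Y_\Phi$ to upgrade $\mathcal{M}$ from a coherent sheaf to a vector bundle: $\mathcal{M}$ is locally free because $\pi_*\mathcal{M}$ is locally free on $\mathbb{P}^{n-1}$ and $\pi$ is finite and flat, its generic fiber having length $d$ since $\Phi$ is monic of degree $d$ in $Z$. Computing ranks at the generic point then forces $\operatorname{rank}(\mathcal{M})=r$, consistent with the divisibility in \Cref{divide}.

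The main obstacle I expect is the bookkeeping in the Serre correspondence: one has to check that ``graded isomorphism classes of graded $S_\Phi$-modules free over $R$'' and ``isomorphism classes of locally free sheaves $\mathcal{M}$ on $Y_\Phi$ with free pushforward'' are genuinely matched by $P\mapsto\widetilde{P}$, with no twisting shifts, torsion quotients, or auxiliary identifications being silently lost along the way. Everything else is either a direct invocation of a previously established result or a routine rank computation at the generic point.
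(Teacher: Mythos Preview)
Your proposal is correct and follows essentially the same route as the paper: the paper derives \Pref{corresp2} directly from \Pref{corresp1} and \Cref{divide} via the Serre correspondence for $Y_\Phi=\operatorname{Proj}(S_\Phi)$, noting that the graded inclusion $R\hookrightarrow S_\Phi$ induces $\pi$ so that the condition $P\cong R^{dr}$ becomes $\pi_*\mathcal{M}\cong\mathcal{O}_{\mathbb{P}^{n-1}}^{dr}$, and that nonsingularity of $Y_\Phi$ forces $\mathcal{M}$ to be locally free. Your extra care about torsion and twisting in the Serre correspondence is more than the paper spells out, but the argument is the same.
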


Such a vector bundle $\mathcal{M}$ on $Y_{\Phi}$ in the proposition above
is called Ulrich (cf. \cite[Definition 2.6]{CKM}). With Proposition
\ref{corresp2} at hand, one can check that the remaining arguments
in \cite{Van}  and \cite{CKM} for the Clifford algebra of a
form can go through as well in this case of the generalized Clifford algebra $C_{\Phi}$. Thus we list the following results without proofs.

\begin{prop}(cf. \cite[Corollary 2.19]{CKM})
In any case, the generalized Clifford algebra $C_{\Phi}$ admits an irreducible representation.
\end{prop}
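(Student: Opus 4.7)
The plan is to recast the existence of an irreducible representation via \Pref{corresp2} and then invoke the existence theorem for simple Ulrich bundles from \cite{CKM}. The first step is to set up the dictionary between the representation-theoretic and sheaf-theoretic sides. Since $F$ is algebraically closed, a representation $\phi\colon C_\Phi \to M_{dr}(F)$ is irreducible iff $\phi(C_\Phi) = M_{dr}(F)$, which by the double centralizer theorem is equivalent to the commutant of $\phi(C_\Phi)$ inside $M_{dr}(F)$ being $F \cdot I$. Tracing through the proof of \Pref{corresp2}, a matrix $Q \in M_{dr}(F)$ commutes with every $\phi(x_j)$ precisely when, viewed as a scalar $R$-linear endomorphism of $R^{dr}$ of degree $0$, it commutes with the $Z$-action $\sum_j X_j \phi(x_j)$; equivalently, when it is a graded $S_\Phi$-module endomorphism. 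Passing to sheaves on $Y_\Phi$, this identifies the commutant of $\phi(C_\Phi)$ with $\operatorname{End}_{\mathcal{O}_{Y_\Phi}}(\mathcal{M})$, so irreducibility of $\phi$ is exactly the condition that the associated Ulrich bundle $\mathcal{M}$ is simple.

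Second, I would cite \cite[Corollary 2.19]{CKM}, which provides a simple Ulrich bundle on any smooth hypersurface $Y_\Phi \subset \mathbb{P}^n$ of the type considered here. Feeding this bundle back through the correspondence of \Pref{corresp2} produces an irreducible representation of $C_\Phi$ of the appropriate dimension, which is what the proposition asserts. No further computation is needed beyond verifying the dictionary above.

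The main obstacle is that the $r=1$ shortcut is unavailable: if an Ulrich line bundle existed on $Y_\Phi$, then \Cref{d-repn} would instantly give an irreducible $d$-dimensional representation, but Ulrich line bundles fail to exist on most smooth hypersurfaces (for instance, on smooth surfaces in $\mathbb{P}^3$ of sufficiently large degree). The argument therefore genuinely depends on the higher-rank existence statement in \cite{CKM}, whose proof uses deformation and specialization techniques inside the moduli of sheaves that I would not attempt to reproduce. Once the commutant-to-endomorphism-ring translation is in place, the proposition reduces to a direct invocation of that result.
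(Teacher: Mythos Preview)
Your proposal is correct and matches the paper's intended approach: the paper does not actually give a proof of this proposition but explicitly states it ``without proof,'' noting only that the arguments of \cite{Van} and \cite{CKM} carry over once \Pref{corresp2} is in hand. Your dictionary (commutant of the representation $\leftrightarrow$ graded $S_\Phi$-endomorphisms of degree~$0$ $\leftrightarrow$ $\End_{\mathcal{O}_{Y_\Phi}}(\mathcal{M})$, hence irreducible $\leftrightarrow$ simple) is exactly the translation the authors are alluding to, and the final appeal to \cite[Corollary~2.19]{CKM} is precisely the citation they give.
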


This gives a positive answer for the first part of
Question 5.2 in \cite{Pappacena}.

\begin{prop}(cf. \cite[Proposition 2]{Van})
In the case of $n=2$ and $d=3$, all irreducible representations of $C_{\Phi}$ have dimension 3, and are in one-to-one correspondence with the points of $Y_{\Phi}\setminus O$, where $O\in Y_{\Phi}$.
\end{prop}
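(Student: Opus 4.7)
The plan is to combine \Pref{corresp2} with the classification of simple vector bundles on an elliptic curve. Since $\Phi$ is absolutely irreducible and $Y_\Phi$ is smooth, the cubic $Y_\Phi\subset\mathbb{P}^2$ is a smooth plane cubic, hence an elliptic curve $E$. By \Cref{divide}, any representation has dimension $3r$ for some $r\geq 1$, and by \Pref{corresp2} the equivalence classes of $3r$-dimensional representations are in bijection with isomorphism classes of rank-$r$ Ulrich bundles $\mathcal{M}$ on $E$.

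First, I would unwind the construction in \Pref{corresp1} to identify the commutant of $\phi(C_\Phi)$ in $M_{3r}(F)$ with the global endomorphism algebra $\operatorname{End}_{\mathcal{O}_E}(\mathcal{M})$: a commuting matrix $Q\in M_{3r}(F)$ is precisely the data of a graded $S_\Phi$-endomorphism of $R^{3r}$, hence of a sheaf endomorphism of $\mathcal{M}$. Since $F$ is algebraically closed, $\phi$ is irreducible iff $\phi(C_\Phi)=M_{3r}(F)$ iff this commutant equals $F$, i.e., iff $\mathcal{M}$ is simple. Second, using $\pi^\ast\mathcal{O}_{\mathbb{P}^1}(1)\cong\mathcal{O}_E(1)$ and the projection formula, the Ulrich condition $\pi_\ast\mathcal{M}\cong\mathcal{O}_{\mathbb{P}^1}^{3r}$ becomes $H^0(\mathcal{M}(-1))=H^1(\mathcal{M}(-1))=0$, which via Riemann--Roch on $E$ forces $\deg\mathcal{M}=3r$. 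By Atiyah's classification of vector bundles on an elliptic curve, a bundle of rank $r$ and degree $3r$ is simple iff $\gcd(r,3r)=1$, i.e., iff $r=1$. Thus every irreducible representation of $C_\Phi$ has dimension $3$.

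It then remains to parametrize the rank-one Ulrich bundles on $E$, which are precisely the line bundles $L$ of degree $3$ with $L(-1)\not\cong\mathcal{O}_E$, equivalently the elements of $\operatorname{Pic}^3(E)\setminus\{\mathcal{O}_E(1)\}$. Fixing a basepoint $O\in E$ gives an Abel--Jacobi isomorphism $E\xrightarrow{\sim}\operatorname{Pic}^3(E)$ (for example, $P\mapsto\mathcal{O}_E(P+2O)$) under which the excluded class $\mathcal{O}_E(1)$ corresponds to a single point of $Y_\Phi$; removing it yields the claimed bijection with $Y_\Phi\setminus O$. The step I expect to be most delicate is the identification of endomorphism algebras, which requires unpacking the equivalence in \Pref{corresp1} degree by degree and checking that arbitrary sheaf endomorphisms really do come from the centralizer in $M_{3r}(F)$; once that is in hand, the remainder reduces to Riemann--Roch and Atiyah's theorem.
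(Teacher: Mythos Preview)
Your proposal is correct and is precisely the argument the paper has in mind: the paper gives no proof of this proposition but simply states that ``the remaining arguments in \cite{Van} and \cite{CKM} for the Clifford algebra of a form can go through as well'' once \Pref{corresp2} is in place, and what you have written is exactly Van den Bergh's argument adapted to $C_\Phi$. One small point of phrasing: the claim ``a bundle of rank $r$ and degree $3r$ is simple iff $\gcd(r,3r)=1$'' should read ``an \emph{indecomposable} bundle \ldots'', but since simple implies indecomposable this does not affect the direction you actually need, namely simple $\Rightarrow r=1$.
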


This result in particular is in concert with what we have derived in Section 3. As for the case of $n=2$ and $d>3$ and the case of $n=3$ and $d=3$, the following two propositions show that $C_{\Phi}$ admits irreducible representations of arbitrary high dimension. In particular, $C_{\Phi}$ is not finitely generated over its center.

\begin{prop}(cf. \cite[Theorem 4]{Van})
In the case of $n=2$ and $d>3$, there are irreducible representations of $C_{\Phi}$ of dimension $dr$ for arbitrary $r\geq 1$.
\end{prop}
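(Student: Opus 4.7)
The plan is to reduce the problem, via \Pref{corresp2}, to the construction of \emph{simple} Ulrich bundles (those with endomorphism ring $F$) of each rank $r \geq 1$ on the smooth plane curve $Y_\Phi \subset \mathbb{P}^2$ of degree $d$, and then to build such bundles inductively on $r$, following the argument of Van den Bergh in \cite[Theorem 4]{Van}.

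For $n = 2$, the hypersurface $Y_\Phi$ is a smooth projective curve of genus $g = (d-1)(d-2)/2 \geq 3$ since $d > 3$. The projection $\pi\colon Y_\Phi \to \mathbb{P}^1$ forgetting $Z$ is a finite morphism of degree $d$, and by the projection formula the Ulrich condition $\pi_*\mathcal{M} \cong \mathcal{O}_{\mathbb{P}^1}^{\,dr}$ is equivalent to the cohomological vanishing $H^0(Y_\Phi, \mathcal{M}(-1)) = H^1(Y_\Phi, \mathcal{M}(-1)) = 0$. By Schur's lemma, an irreducible $dr$-dimensional representation of $C_\Phi$ corresponds under \Pref{corresp2} to an Ulrich bundle of rank $r$ on $Y_\Phi$ with $\operatorname{End}_{\mathcal{O}_{Y_\Phi}}(\mathcal{M}) = F$, so it suffices to construct a simple Ulrich bundle of each rank.

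For the base case $r = 1$, Riemann--Roch forces an Ulrich line bundle $L$ to satisfy $\deg L = d + g - 1$, and the Ulrich condition then reduces to non-effectivity of $L(-1) \in \operatorname{Pic}^{g-1}(Y_\Phi)$, which is a dense open condition (the complement of the theta divisor in an abelian variety of dimension $g \geq 3$). Any such line bundle is automatically simple. For the inductive step, given a simple Ulrich bundle $\mathcal{M}_{r-1}$ of rank $r - 1$ and a simple Ulrich line bundle $L$ chosen generically so that $\operatorname{Hom}(L, \mathcal{M}_{r-1}) = 0$, I would consider nontrivial extensions
\begin{equation*}
0 \to \mathcal{M}_{r-1} \to \mathcal{M}_r \to L \to 0.
\end{equation*}
The Ulrich property propagates through the long exact sequence of the twist by $\mathcal{O}(-1)$, so $\mathcal{M}_r$ is Ulrich of rank $r$, and $\operatorname{Ext}^1(L, \mathcal{M}_{r-1}) \cong H^1(\mathcal{M}_{r-1} \otimes L^{-1})$ is nonzero by Serre duality together with the numerical constraints imposed by the Ulrich condition.

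The main obstacle is to verify that for a generic class in $\mathbb{P}(\operatorname{Ext}^1(L, \mathcal{M}_{r-1}))$ the resulting extension $\mathcal{M}_r$ is not merely indecomposable but truly simple, so that the induced representation is irreducible. The standard way is a parameter count: the sublocus in $\mathbb{P}(\operatorname{Ext}^1(L, \mathcal{M}_{r-1}))$ parametrizing extensions admitting a proper Ulrich direct summand is a proper closed subvariety, and any endomorphism of $\mathcal{M}_r$ outside that sublocus must preserve the filtration and hence, by the simplicity of $\mathcal{M}_{r-1}$ and $L$ together with nonsplitness, be a scalar. This is carried out in detail in \cite[Theorem 4]{Van}, whose argument depends only on the smoothness and positive genus of the underlying curve, both of which hold for our $Y_\Phi$ with $d > 3$; hence Van den Bergh's construction transfers to $C_\Phi$ without modification.
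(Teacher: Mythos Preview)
Your overall strategy matches the paper's exactly: reduce via \Pref{corresp2} to a statement about Ulrich bundles on the smooth plane curve $Y_\Phi$ and then invoke \cite[Theorem~4]{Van}. The paper in fact offers no further argument, stating only that Van den Bergh's proof carries over once \Pref{corresp2} is established.

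Your sketch of the bundle construction, however, contains a genuine gap. Under the correspondence of \Pref{corresp2}, an irreducible $dr$-dimensional representation corresponds not merely to a \emph{simple} Ulrich bundle (one with $\End(\mathcal{M}) = F$) but to one admitting no proper Ulrich subbundle, equivalently a \emph{stable} bundle: an invariant subspace of the representation yields a free graded $R$-submodule of $R^{dr}$, hence an Ulrich subbundle of the same slope $d+g-1$, and conversely any such subbundle gives an invariant subspace. Now in your inductive step every $\mathcal{M}_r$ is built as an extension
\[
0 \longrightarrow \mathcal{M}_{r-1} \longrightarrow \mathcal{M}_r \longrightarrow L \longrightarrow 0,
\]
so $\mathcal{M}_{r-1}$ is by construction a proper Ulrich subbundle of $\mathcal{M}_r$; the associated $dr$-dimensional representation therefore has a $d(r-1)$-dimensional invariant subspace and is reducible for \emph{every} extension class. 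Your computation that the generic such $\mathcal{M}_r$ has $\End = F$ is correct, but the implication ``$\End(\mathcal{M}) = F \Rightarrow$ irreducible representation'' is false: already for $r=2$, any non-split extension of two non-isomorphic Ulrich line bundles on $Y_\Phi$ is simple yet strictly semistable. What is actually required is a stable Ulrich bundle of each rank; Van den Bergh obtains these by a further argument (a deformation/dimension count showing that the generic semistable Ulrich bundle of the given rank is stable), and it is this step, not the verification of simplicity, that your sketch is missing.
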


\begin{prop}(cf. \cite[Corollaries 3.2 and 3.6]{CKM})
In the case of $n=3$ and $d=3$, there exist $3r$-dimensional irreducible representations of $C_{\Phi}$ for arbitrary $r\geq 1$. Moreover, for each $r$ there are finitely many equivalence classes of such representations and when $r=1$ there are exactly 72.
\end{prop}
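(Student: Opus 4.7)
The plan is to deduce the proposition from the correspondence established in \Pref{corresp2} together with the known theory of Ulrich bundles on smooth cubic surfaces. Since $n=3$ and $d=3$, the hypersurface $Y_\Phi \subseteq \mathbb{P}^3$ is a smooth cubic surface (we are in the absolutely irreducible, nonsingular case as assumed in \Pref{corresp2}). By \Pref{corresp2} the equivalence classes of $3r$-dimensional representations of $C_\Phi$ are in bijection with isomorphism classes of rank $r$ vector bundles $\mathcal{M}$ on $Y_\Phi$ satisfying $\pi_\ast \mathcal{M} \cong \mathcal{O}_{\mathbb{P}^2}^{3r}$, i.e.\ Ulrich bundles of rank $r$. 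Moreover, by \Cref{d-repn} (applied via an irreducible component) an irreducible representation corresponds to an Ulrich bundle that is simple/indecomposable. So the task reduces to producing, for every $r \geq 1$, a simple Ulrich bundle of rank $r$ on $Y_\Phi$, and to counting the rank 1 ones.

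First I would recall the geometry: over an algebraically closed field of characteristic 0 the smooth cubic surface $Y_\Phi$ is isomorphic to the blow-up of $\mathbb{P}^2$ at six points in general position, with $\Pic(Y_\Phi) \cong \mathbb{Z}^7$ generated by the pullback $\ell$ of a line and the six exceptional divisors $e_1, \dots, e_6$. The hyperplane class under the cubic embedding is $H = 3\ell - \sum e_i$. Translating the Ulrich condition for a line bundle $\mathcal{L} = \mathcal{O}(D)$ into cohomological vanishing via the projection formula and Riemann--Roch on $Y_\Phi$, one obtains explicit numerical constraints on the intersection numbers $D\cdot \ell$, $D\cdot e_i$, $D^2$, and $D\cdot H$. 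A direct enumeration of divisor classes $D$ satisfying these constraints (this is precisely the content of \cite[Corollary 3.6]{CKM}) yields exactly 72 classes, each giving a distinct (necessarily simple) Ulrich line bundle. This handles the case $r=1$ and gives the count of 72.

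For $r > 1$, I would construct rank $r$ Ulrich bundles by iterated extensions of the Ulrich line bundles found above. Given two Ulrich bundles $\mathcal{M}_1, \mathcal{M}_2$ of ranks $r_1, r_2$ with $\Ext^1(\mathcal{M}_2, \mathcal{M}_1) \neq 0$, any nontrivial extension $0 \to \mathcal{M}_1 \to \mathcal{M} \to \mathcal{M}_2 \to 0$ is again Ulrich of rank $r_1+r_2$, and a generic choice is indecomposable (hence simple after possibly deforming inside its moduli), as shown in \cite[Corollary 3.2]{CKM}. Starting from two distinct Ulrich line bundles and iterating gives simple Ulrich bundles of every rank $r \geq 1$. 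As a consequence, $C_\Phi$ admits irreducible representations of arbitrarily large dimension, so $C_\Phi$ cannot satisfy a polynomial identity of bounded degree and in particular is not finitely generated over its center.

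The main obstacle is the enumeration yielding exactly 72. The inequalities coming from the Ulrich conditions cut out a finite but not obviously small set of classes in $\Pic(Y_\Phi)$, and one must do the bookkeeping carefully to identify them with a natural combinatorial object on the cubic surface (the list is related to the 27 lines and their configuration, in the same spirit as the classical counts of special divisor classes on del Pezzo surfaces). This is exactly the calculation performed in \cite{CKM}, and the cleanest route is to invoke it directly rather than reproduce the case analysis.
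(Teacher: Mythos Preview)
Your overall route matches the paper's: the paper does not give a proof of this proposition at all, but simply observes that once \Pref{corresp2} is in place, the arguments of \cite{CKM} for the ordinary Clifford algebra carry over verbatim to $C_\Phi$, and then states the result without proof. So invoking the Ulrich correspondence and then appealing to \cite{CKM} is exactly what the paper does.

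That said, a few points in your sketch need tightening. First, your appeal to \Cref{d-repn} is misplaced: that corollary concerns only $d$-dimensional representations (the rank-one case on the bundle side) and says nothing about the correspondence between irreducibility of a $3r$-dimensional representation and any property of the associated rank-$r$ bundle. The correct statement, as in \cite{CKM}, is that irreducible representations correspond to \emph{stable} Ulrich bundles, not merely indecomposable or simple ones; your phrase ``indecomposable (hence simple after possibly deforming inside its moduli)'' blurs these distinct notions and is not how the argument runs. Second, you do not address the finiteness claim for each $r$, which is part of the proposition; your extension construction shows existence for every $r$ but says nothing about how many there are. This is precisely the content of \cite[Corollary~3.2]{CKM}, so the cleanest fix is to invoke it directly rather than attempt an ad hoc argument. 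With those corrections your write-up is an accurate unpacking of what the paper leaves implicit.
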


\section*{Acknowledgement}
The first author is currently supported by Wallonie-Bruxelles International.
The second author was partially supported by the National Science Council of Taiwan under grant NSC 102-2115-M-005-002.

\section*{References}
\bibliographystyle{amsalpha}

\end{document}